\documentclass[10pt]{article}
\usepackage[ansinew]{inputenc}
\usepackage{amsmath}
\usepackage{amsfonts}
\usepackage{amssymb}
\usepackage{amsthm}
\usepackage{newlfont}
\usepackage{amsbsy}
\usepackage{bm}
\usepackage{color}
\usepackage{epsfig}
\usepackage{dsfont}
\usepackage{latexsym}
\usepackage{array}
\usepackage{longtable}
\usepackage{enumerate}
\usepackage{multicol}
\usepackage{mathrsfs}
\usepackage{wasysym}
\usepackage{graphics}
\usepackage{graphicx}
\usepackage{hyperref}
\hypersetup{
    colorlinks,
    citecolor=blue,
    filecolor=black,
    linkcolor=red,
    urlcolor=black
}

\voffset0.5cm
\setlength{\headheight}{0pt}
\setlength{\topmargin}{0pt}
\setlength{\headsep}{0pt}
\setlength{\textheight}{22cm}
\setlength{\footskip}{0.7cm}
\hoffset0.5cm
\setlength{\marginparwidth}{0pt}
\setlength{\oddsidemargin}{0pt}
\setlength{\evensidemargin}{0pt}
\setlength{\textwidth}{16cm}

\newtheorem{lema}{Lemma}[section]
\newtheorem{teorem}{Theorem}[section]

\newtheorem{defin}{Definition}[section]
\newtheorem{prop}{Proposition}[section]
\newtheorem{rem}{\it Remark}[section]
\newcommand{\nor}[2]{{\left\|{#1}\right\|_{#2}}}
\newcommand{\nora}[3]{{\left\|{#1}\right\|_{#2}^{#3}}}

\title{\textbf{On the Well-posedness for the Chen-Lee equation in periodic Sobolev spaces}}
\author{Ricardo A. Pastr\'an R. \thanks{Ricardo Pastr\'an. Departamento de Matem\'aticas, Universidad Nacional de Colombia, Bogot\'a, Colombia. Carrera 30 No. 45-03. C\'odigo postal 11321.
E-mail: {\tt rapastranr@unal.edu.co}}\\ Oscar G. Ria\~no C. \thanks{Departamento de Matem\'aticas, Universidad Nacional de Colombia, Bogot\'a, Colombia. Carrera 30 No. 45-03. C\'odigo postal 11321.
E-mail: {\tt ogrianoc@unal.edu.co}}}

\begin{document}

\maketitle

\begin{abstract}
We prove that the initial value problem associated to a perturbation of the Benjamin-Ono equation or Chen-Lee equation $u_t+uu_x+\beta \mathcal{H}u_{xx}+\eta (\mathcal{H}u_x - u_{xx})=0$, where $x\in \mathbb{T}$, $t> 0$, $\eta >0$ and $\mathcal{H}$ denotes the usual Hilbert transform, is locally and globally well-posed in the Sobolev spaces $H^s(\mathbb{T})$ for any $s>-\frac{1}{2}$. We also prove some ill-posedness issues when $s<-1$. 
\end{abstract}

\textit{Keywords:} Cauchy problem, local and global well-posedness, Benjamin-Ono equation.

\section{Introducción}\label{section}

The goal in this paper is to establish well-posedness results on the Cauchy pro\-blem
\begin{equation}\label{cl}
CL \left\{
\begin{aligned}
u_t+uu_x+\beta \mathcal{H}u_{xx} + \eta (\mathcal{H}u_x-u_{xx})&=0 \qquad t>0, \quad  x\in \mathbb{T},\\
u(x,0)&=\phi(x),
\end{aligned}
\right.
\end{equation}
where $\beta,\,\eta >0$ are constants. In the equation, $\mathcal{H}$ denotes the usual Hilbert transform given by
\begin{equation*}
\mathcal{H}f(x)=\frac{1}{2\pi}\text{p.v.}\int_{-\pi}^{\pi} \cot\Bigl(\frac{x-y}{2}\Bigr)f(y) \ dy=\frac{i}{2}(\text{sgn}(k)\widehat{f}(k))^{\vee}(x);\;\,k \in \mathbb{Z},\, f\in \mathcal{P}.
\end{equation*}
This equation was first introduced by H. H. Chen and Y. C. Lee in \cite{CL} to describe fluid and plasma turbulence. It deserves to remark that the fourth and the fifth terms represent, respectively, instability and dissipation. Several authors have studied this equation from a numerical standpoint. For example, H. H. Chen, Y. C. Lee and S. Qian in \cite{clq, clq1}, and B. -F. Feng and T. Kawahara, in \cite{FeKa}, who investigated the initial value problem as well as stationary solitary and periodic waves of the equation. Also, R. Pastr\'an in \cite{P} proved using the Fourier restriction norm method that the initial value problem $CL$ is locally well-posed in the Sobolev spaces $H^s(\mathbb{R})$ for any $s>-1/2$, globally well-posed in $H^s(\mathbb{R})$ when $s\geq 0$ and that one cannot solve the Cauchy problem by a Picard iterative method implemented on the integral formulation of $CL$ for initial data in the Sobolev space $H^s(\mathbb{R})$, $s<-1$.

We say that the Cauchy problem or initial value problem (\ref{cl}) is \textit{locally well-posed} in $H^s(\mathbb{T})$ if for any $\phi^* \in H^s(\mathbb{T})$ there exists a time $T>0$ and an open ball $B$ in $H^s(\mathbb{T})$ containing $\phi^*$, and a subset $\mathfrak{X}_T$ of $C([0,T]; H^s(\mathbb{T}))$, such that for each $\phi \in B$ there exists a unique solution $u\in \mathfrak{X}_T$ to the integral equation associated to the Cauchy problem and furthermore the map $\phi \mapsto u$ is continuous from $B$ to $\mathfrak{X}_T$. If we can take $T$ arbitrarily large we say that the initial value problem is \textit{globally well-posed}.

We show that the initial value problem $CL$ is locally and globally well-posed in the Sobolev spaces $H^s(\mathbb{T})$ for any $s>-1/2$. Since the dissipation of the Chen-Lee equation is in some sense ``stronger" than the dispersion, we will use the purely dissipative methods of Dix for the KdV-B equation \cite{Dix}, see also Duque \cite{Duque}, Esfahani \cite{Amin} and Pilod \cite{KDV}, which consist in applying a fixed point theorem to the integral equation associated on $CL$ in an adequate $\mathfrak{X}_T$ space (see (\ref{spacexts}) for the exact definition). We also prove that one cannot solve the Cauchy problem by a Picard iterative method implemented on the integral formulation of $CL$ for initial data in the Sobolev space $H^s(\mathbb{T})$, $s<-1$. In particular, the methods introduced by Bourgain \cite{Bourgain} and Kenig, Ponce and Vega \cite{KPV} for the KdV equation cannot be used for $CL$ with initial data in the Sobolev space $H^s(\mathbb{T})$ for $s<-1$. This kind of ill-posedness result is weaker than the loss of uniqueness proved by Dix in the case of Burgers equation. 
\subsection{Definitions and Notations}
Given $a$, $b$ positive numbers, $a\lesssim b$ means that there exists a positive constant $C$ such that $a\leq C b$. And we denote $a\sim b$ when, $a \lesssim  b$ and $b \lesssim a$. We will also denote $a\lesssim_{\lambda} b$ or $b\lesssim_{\lambda} a$, if the constant involved depends on some parameter $\lambda$. Given a Banach space $X$, we denote by $\nor{\cdot}{X}$ the norm in $X$. We will understand $\langle \cdot \rangle = (1+|\cdot|^2)^{1/2}$.

$\mathcal{P}=C^{\infty}(\mathbb{T})$ denotes the space of all infinitely differentiable $2\pi$-periodic functions and $\mathcal{P}'$ will denote the space of periodic distributions, i.e., the topological dual of $\mathcal{P}$. For $f\in \mathcal{P}'$ we denote by $\widehat{f}$ or $\mathcal{F}(f)$ the Fourier transform of $f$, $\Hat{f}=\left(\Hat{f}(k)\right)_{k\in\mathbb{Z}}$, where $\Hat{f}(k)=\frac{1}{2\pi}\int_{-\pi}^{\pi}e^{-ik\cdot z}f(z)\,dz,$ for all integer $z$. We will use the Sobolev spaces $H^s(\mathbb{T})$ equipped with the norm 
$$\nor{\phi}{H^s}=(2\pi)^{\frac{1}{2}} \nor{(1+k^2)^{s/2}\Hat{\phi}(k)}{l^2(\mathbb{Z})}.$$ 
We will denote $\widehat{u}(k,t)$, $k\in\mathbb{Z}$, as the Fourier coefficient of $u(t)$ respect to the variable $x$. Let $U$ be the unitary group in $H^s(\mathbb{T})$, $s\in \mathbb{R}$, generated by the skew-symmetric operator $-\beta \mathcal{H}\partial_x^2 $, which defines the free evolution of the Benjamin-Ono equation, that is,
\begin{equation}\label{unitarygroup}
U(t)=\exp (itq(D_x)), \;\; U(t)f= \Bigl(e^{itq(\xi)}\Hat{f}\Bigr)^{\vee}\; \text{with}\;\, f\in H^s(\mathbb{T}), \, t\in \mathbb{R},
\end{equation}
where $q(D_x)$ is the Fourier multiplier with symbol $q(k)=\beta \,k \,|k|$, for all $k\in \mathbb{Z}$. Since the linear symbol of equation in (\ref{cl}) is $iq(k)+p(k)$, where $p(k)=\eta \,(k^2-|k|)$ for all $k \in \mathbb{Z}$, we also denote by $S(t)=e^{-(\beta \mathcal{H}\partial_x^2 +\eta (\mathcal{H}\partial_x-\partial_x^2))t}$, for all $t\geq 0$, the semigroup in $H^s(\mathbb{T})$ generated by the operator $-(\beta \mathcal{H}\partial_x^2 +\eta (\mathcal{H}\partial_x-\partial_x^2))$, i.e.,
\begin{align}
S(t)f=\Bigl( e^{i\,q(\xi)\,t -p(\xi)\,t}\Hat{f} \Bigr)^{\vee}\quad \text{for}\quad f\in H^s(\mathbb{T}),\;t\geq 0. \label{semigrupos}
\end{align}
We define the next Banach spaces which are inspired by an adaptation made by Esfahani, in \cite{Amin}, of the spaces originally presented by Dix in \cite{Dix}.
\begin{defin} 
Let $0\leq T\leq 1$ and $s < 0$. We consider $X_T^s$ as the class of all the functions $u\in C\left([0,T];H^s(\mathbb{T})\right)$ such that
\begin{equation}\label{spacexts}
\left\|u\right\|_{X_T^s}:=\sup_{t\in(0,T]}\left(\left\|u(t)\right\|_{H^s}+t^{|s|/2}\left\|u(t)\right\|_{L^2}\right) < \infty.
\end{equation}
\end{defin}
\subsection{ Main Results}
We will mainly work on the integral formulation of the $CL$ equation,
\begin{equation}\label{intequation}
u(t)=S(t)\phi - \int_0^tS(t-t')[u(t')u_x(t')]\,dt' ,\quad t\geq 0.
\end{equation}
\begin{teorem}[Local well-posedness]\label{mainresult}
Let $\beta \geq 0$, $\eta >0$ and $s>-1/2$. Then for any $\phi \in H^s(\mathbb{T})$ there exists $T=T(\nor{\phi}{H^s})>0$ and a unique solution $u$ of the integral equation (\ref{intequation}) satisfying
\begin{align*}
&u\in C([0,T],H^s(\mathbb{T}))\cap C((0,T),H^{\infty}(\mathbb{T})).
\end{align*}
Moreover, the flow map $\phi \mapsto u(t)$ is smooth from $H^s(\mathbb{T})$ to $C([0,T],H^s(\mathbb{T}))\cap C((0,T],H^{\infty}(\mathbb{T}))\cap \mathfrak{X}_T^s$.
\end{teorem}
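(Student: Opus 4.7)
The plan is to apply the Banach fixed point theorem to the Duhamel map
\begin{equation*}
\Phi_\phi(u)(t):=S(t)\phi-\int_0^t S(t-t')\,[u(t')u_x(t')]\,dt',\qquad t\in[0,T],
\end{equation*}
on a closed ball in the Banach space $X_T^s$ for some $T=T(\|\phi\|_{H^s})>0$. The analysis below focuses on the delicate case $s\in(-1/2,0)$, since for $s\geq 0$ one would work directly in $C([0,T];H^s(\mathbb{T}))$ where the nonlinear term is controlled by standard product estimates together with the dissipative smoothing of $S(t)$.

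\textbf{Linear estimate.} I would first establish $\|S(\cdot)\phi\|_{X_T^s}\lesssim \|\phi\|_{H^s}$. Since $|e^{iq(k)t-p(k)t}|\leq 1$, the $H^s$ contribution is immediate from Plancherel. For the weighted $L^2$ part one writes
\begin{equation*}
t^{|s|/2}\,\|S(t)\phi\|_{L^2}\leq \Big(\sup_{k\in\mathbb{Z}}\, t^{|s|/2}\,e^{-p(k)t}\,\langle k\rangle^{|s|}\Big)\|\phi\|_{H^s},
\end{equation*}
and since $p(k)=\eta(k^2-|k|)\sim \eta k^2$ for $|k|$ large, the elementary bound $y^{|s|/2}e^{-y}\leq C_s$ makes the supremum finite uniformly in $t\in(0,1]$; the finitely many resonant modes $k=0,\pm 1$ where $p(k)=0$ contribute a harmless constant because $t\leq T\leq 1$.

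\textbf{Bilinear estimate.} The heart of the proof is the inequality
\begin{equation*}
\Big\|\int_0^t S(t-t')\,\partial_x\bigl(u(t')v(t')\bigr)\,dt'\Big\|_{X_T^s}\lesssim T^{1/4+s/2}\,\|u\|_{X_T^s}\|v\|_{X_T^s}.
\end{equation*}
Writing $uu_x=\tfrac12\partial_x(u^2)$, the pointwise bound $|\widehat{uv}(k,t')|\leq (2\pi)^{-1}\|u(t')\|_{L^2}\|v(t')\|_{L^2}$ (from Young's convolution inequality applied to $\widehat u\ast\widehat v$) together with Plancherel yields, for $\sigma\in\mathbb{R}$,
\begin{equation*}
\|S(\tau)\partial_x(uv)(t')\|_{H^\sigma}^2\lesssim \|u(t')\|_{L^2}^2\|v(t')\|_{L^2}^2\sum_{k\in\mathbb{Z}}\langle k\rangle^{2\sigma}k^2 e^{-2p(k)\tau}.
\end{equation*}
Comparing the sum with a Gaussian integral via $p(k)\sim \eta k^2$ gives the Gamma-function bound
\begin{equation*}
\sum_{k\in\mathbb{Z}}\langle k\rangle^{2\sigma}k^2 e^{-2p(k)\tau}\lesssim \tau^{-(3/2+\sigma)}\qquad (\sigma>-3/2).
\end{equation*}
Applied with $\sigma=s$ (for the $H^s$ piece of $X_T^s$) and $\sigma=0$ (for the weighted $L^2$ piece), combined with the $X_T^s$-bound $\|u(t')\|_{L^2}\leq t'^{\,s/2}\|u\|_{X_T^s}$, and followed by the Beta-integral evaluation
\begin{equation*}
\int_0^t(t-t')^{-(3/4+\sigma/2)}\,t'^{\,s}\,dt'=t^{1/4+s-\sigma/2}\,B(s+1,\,1/4-\sigma/2),
\end{equation*}
one obtains the factor $t^{1/4+s/2}$ in \emph{both} components of the $X_T^s$-norm: in the weighted $L^2$ part, the explicit weight $t^{|s|/2}=t^{-s/2}$ exactly compensates the extra $\tau^{-3/4}$ arising when $\sigma=0$. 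The positivity $1/4+s/2>0$ is precisely the threshold $s>-1/2$, and this is where the sharp restriction appears. The integrability at $t'=0$ requires $s>-1$ and $\sigma<1/2$, both automatic in our range.

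\textbf{Closure and smoothing.} The linear and bilinear estimates together give $\|\Phi_\phi(u)\|_{X_T^s}\leq C\|\phi\|_{H^s}+CT^{1/4+s/2}\|u\|_{X_T^s}^2$ and an analogous Lipschitz bound, so for $T$ small (depending only on $\|\phi\|_{H^s}$) the map $\Phi_\phi$ is a contraction on a suitable ball in $X_T^s$; this yields existence, uniqueness, and smoothness of the flow map by the standard analytic dependence of the unique fixed point of a quadratic Picard scheme on the parameter $\phi$. To obtain the instantaneous regularization $u\in C((0,T];H^\infty)$, I would bootstrap: the same Gaussian computation used for the linear estimate proves $\|S(t)f\|_{H^{s+\alpha}}\lesssim t^{-\alpha/2}\|f\|_{H^s}$ for all $\alpha\geq 0$, and rerunning the bilinear estimate on the shifted interval $[t_0,T]$, $t_0>0$, with the initial datum $u(t_0)\in H^{s+\alpha}$ raises the Sobolev index by any $\alpha>0$ at each step. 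The principal obstacle is the bilinear estimate: one must verify that the Gaussian smoothing of the semigroup beats both the derivative loss of $uu_x$ and the borderline integrability of the Beta kernel at $t'=0$, and it is exactly the balance between these two effects that produces the sharp regularity cutoff $s>-1/2$.
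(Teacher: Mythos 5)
Your proposal is correct and follows essentially the same route as the paper: a contraction on a ball of $X_T^s$ (respectively $C([0,T];H^s(\mathbb{T}))$ for $s\geq 0$), with the linear bound coming from the symbol estimate $t^{|s|/2}\langle k\rangle^{|s|}e^{-p(k)t}\lesssim_{s,\eta}1$, the bilinear bound from Young's convolution inequality plus the $l^2$ decay of $|k|^{1+s}e^{-p(k)\tau}$ and a Beta-integral giving the factor $T^{(1+2s)/4}$, and the $H^\infty$ regularization by iterating the parabolic gain of $\delta<s+\tfrac12$ derivatives. Your exponents $t^{1/4+s/2}=t^{(1+2s)/4}$ and the threshold identification $s>-\tfrac12$ match the paper's Propositions 2.1 and 3.1 exactly.
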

\begin{teorem}[Global well-posedness]\label{globalresult}
 Let $s>-1/2$ and $\phi \in H^s(\mathbb{T})$. Then the supremum of all $T>0$ for which all the assertions of Theorem \ref{mainresult} hold is infinity. 
\end{teorem}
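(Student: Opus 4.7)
The strategy is the standard continuation argument: combine Theorem~\ref{mainresult} with an a priori $L^2$ bound that is made available by the $H^\infty$-smoothing already built into the local theory. Given $\phi\in H^s(\mathbb{T})$ with $s>-1/2$, Theorem~\ref{mainresult} yields a solution $u$ on $[0,T_0]$, where $T_0=T(\|\phi\|_{H^s})$, satisfying $u(t)\in H^\infty(\mathbb{T})$ for every $t\in(0,T_0]$. In particular, fixing any $t_0\in(0,T_0)$ we have $u(t_0)\in L^2(\mathbb{T})$, so one may reinvoke the local theory at the $s=0$ regularity level from time $t_0$ onward.

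The main step is the monotonicity of $\|u(t)\|_{L^2}$. For $t>t_0$ the regularity $u(t)\in H^\infty$ justifies taking the $L^2$ pairing of the equation with $u$. The Burgers nonlinearity vanishes because $\int_{\mathbb{T}}u^2u_x\,dx=\tfrac{1}{3}\int_{\mathbb{T}}(u^3)_x\,dx=0$, and the dispersive contribution vanishes because the symbol $-ik|k|$ of $\mathcal{H}\partial_x^2$ is odd in $k$ while $|\widehat{u}(k)|^2$ is even (as $u$ is real). By Parseval the dissipative term reads
\begin{equation*}
\int_{\mathbb{T}}u(\mathcal{H}u_x-u_{xx})\,dx=2\pi\sum_{k\in\mathbb{Z}}p(k)|\widehat{u}(k,t)|^2,\qquad p(k)=\eta(k^2-|k|)\geq 0,
\end{equation*}
since $|k|\leq k^2$ for every $k\in\mathbb{Z}$. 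Consequently $\tfrac{d}{dt}\|u(t)\|_{L^2}^2\leq 0$, so that $\|u(t)\|_{L^2}\leq \|u(t_0)\|_{L^2}=:M$ on the whole interval of existence past $t_0$.

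With this bound in hand, continuation is routine. Let $T^{*}$ denote the supremum of all $T>0$ for which the conclusions of Theorem~\ref{mainresult} hold on $[0,T]$, and suppose toward a contradiction that $T^{*}<\infty$. For every $\tau\in[t_0,T^{*})$ at which the solution exists one has $\|u(\tau)\|_{H^0}=\|u(\tau)\|_{L^2}\leq M$, so applying Theorem~\ref{mainresult} with datum $u(\tau)$ at the $s=0$ level produces an extension on $[\tau,\tau+\delta]$, where $\delta=\delta(M)>0$ is independent of $\tau$. Choosing $\tau$ with $T^{*}-\tau<\delta$ extends the solution beyond $T^{*}$; uniqueness from Theorem~\ref{mainresult} glues this extension onto the original solution, contradicting the maximality of $T^{*}$. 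Therefore $T^{*}=\infty$.

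The only substantive ingredient is the energy identity above, and the observation that makes it work (and would fail in the non-dissipative Benjamin--Ono case $\eta=0$) is the pointwise non-negativity $p(k)=\eta(k^2-|k|)\geq 0$ on $\mathbb{Z}$. A mild subtlety is that for $s<0$ the initial datum need not lie in $L^2$, which is precisely why the $H^\infty$-smoothing clause in Theorem~\ref{mainresult} is essential: it is what enables one to enter the $L^2$ theory at the positive time $t_0$ before the energy estimate can be applied.
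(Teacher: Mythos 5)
Your proposal is correct and follows essentially the same route as the paper: a non-increasing $L^2$ energy (the nonlinear and dispersive terms drop out and $p(k)=\eta(k^2-|k|)\geq 0$), combined with the smoothing clause of Theorem \ref{mainresult} to enter $L^2$ at a positive time when $s<0$, and then a standard continuation/gluing argument at the $s=0$ level. The paper merely organizes this as two explicit steps ($s\geq 0$ first, then $s\in(-\tfrac12,0)$ by reduction) and, for $s<0$, also records the resulting bound on the $\left\|\cdot\right\|_{X_T^s}$ norm so that \emph{all} assertions of Theorem \ref{mainresult} persist; that last bookkeeping step (and the missing factor $\eta$ on the left side of your dissipation identity) are the only things you leave implicit.
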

It is known that the Banach's Fixed Point Theorem cannot be applied to the Benjamin-Ono equation \cite{MolSauTzv}. Here, it is proved that there does not exist a $T>0$ such that (\ref{cl}) admits a unique local solution defined on the interval $[0,T]$ and such that the flow-map data-solution $\phi \mapsto u(t)$, $t\in [0,T]$, is $C^2$ differentiable at the origin from $H^s(\mathbb{T})$ to $H^s(\mathbb{T})$. As a consequence, we cannot solve the Cauchy problem for the $CL$ equation by a Picard iterative method implemented on the integral formulation (\ref{intequation}), at least in the Sobolev spaces $H^s(\mathbb{T})$, with $s<-1$.  
\begin{teorem}\label{malpuestodos}
Fix $s<-1$. Then there does not exist a $T>0$ such that (\ref{cl}) admits a unique local solution defined on the interval $[0,T]$ and such that the flow-map data-solution
\begin{equation}
\phi \longmapsto u(t), \qquad t\in [0,T],
\end{equation}
for (\ref{cl}) is $C^2$ differentiable at zero from $H^s(\mathbb{T})$ to $H^s(\mathbb{T})$.
\end{teorem}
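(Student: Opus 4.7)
The plan is a standard Bourgain-style ill-posedness argument using the second Picard iterate. Suppose, for contradiction, that the flow map $\phi \mapsto u(t)$ is $C^2$ at the origin from $H^s(\mathbb{T})$ to $H^s(\mathbb{T})$ for some $T>0$. Substituting $\phi \mapsto \delta \phi$ into the integral equation (\ref{intequation}) and expanding in powers of $\delta$, one formally gets $u = \delta\, S(t)\phi + \delta^2 u_2(t) + O(\delta^3)$, where
\begin{equation*}
u_2(t) = -\tfrac{1}{2}\int_0^t S(t-t')\,\partial_x\bigl((S(t')\phi)^2\bigr)\,dt'.
\end{equation*}
By the $C^2$ hypothesis, the quadratic term $u_2$ must satisfy a uniform bound of the form $\|u_2(t)\|_{H^s(\mathbb{T})} \lesssim \|\phi\|_{H^s(\mathbb{T})}^2$ for all $\phi \in H^s(\mathbb{T})$ and all $t\in(0,T]$. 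The goal is to exhibit a sequence $\phi_N$ with $\|\phi_N\|_{H^s}\sim 1$ but $\|u_2(t_0)\|_{H^s}\to \infty$ for some fixed $t_0\in(0,T]$.

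I would choose the two-mode datum $\phi_N(x) = N^{-s}\bigl(e^{iNx}+e^{-i(N-1)x}\bigr)$, which has $\|\phi_N\|_{H^s}\sim 1$. Writing $u_2$ in Fourier variables, the contribution to $\widehat{u_2}(1,t)$ coming from the interaction $(k_1,k_2)=(N,-(N-1))$ is, up to constants,
\begin{equation*}
\widehat{u_2}(1,t)\;\sim\; N^{-2s}\cdot \frac{e^{(iq(1)-p(1))t}-e^{(iq(N)+iq(-(N-1))-p(N)-p(-(N-1)))t}}{-i\Omega(N)-P(N)},
\end{equation*}
where $\Omega(N)=q(1)-q(N)-q(-(N-1))$ and $P(N)=p(N)+p(-(N-1))-p(1)$. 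A direct computation with $q(k)=\beta k|k|$ and $p(k)=\eta(k^2-|k|)$ gives $\Omega(N)=-2\beta(N-1)$ and $P(N)=2\eta(N-1)^2$, so the denominator has magnitude $\sim N^2$. Meanwhile, since $p(N)+p(-(N-1))\sim 2\eta N^2>0$, the ``large'' exponential decays essentially to zero for any fixed $t_0>0$, while the ``small'' exponential $e^{(i\beta-0)t_0}$ stays of size one. Consequently
\begin{equation*}
|\widehat{u_2}(1,t_0)|\;\gtrsim\; \frac{N^{-2s}}{N^2}\;=\;N^{-2s-2},
\end{equation*}
and therefore $\|u_2(t_0)\|_{H^s}\gtrsim \langle 1\rangle^s\, N^{-2s-2}\to\infty$ as $N\to\infty$ whenever $s<-1$. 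This contradicts the bilinear bound forced by $C^2$ differentiability, proving the theorem.

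The main technical obstacle is verifying that no other pairs $(k_1,k_2)$ can cancel the contribution picked at $k=1$; this is handled by the fact that $\phi_N$ has only two Fourier modes, so the sum defining $\widehat{u_2}(1,t)$ reduces to essentially a single term (the other pair with $k_1+k_2=1$ being $(-(N-1),N)$, which gives the same contribution by symmetry, and possibly the diagonal terms which sit at $k\in\{2N,-2(N-1),1-2N+\cdots\}$). A secondary issue is checking that the sign/size of $p(N)+p(-(N-1))$ indeed kills the first exponential uniformly for $t$ in a fixed subinterval of $(0,T]$; this is immediate because $\eta>0$ and $p(N)+p(-(N-1))\to +\infty$, so the estimate above is valid for any fixed $t_0>0$ and all sufficiently large $N$.
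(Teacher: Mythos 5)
Your proposal is correct and follows essentially the same route as the paper: both reduce the $C^2$ hypothesis to the boundedness of the second Picard iterate $-\frac12\int_0^t S(t-t')\partial_x\bigl((S(t')\phi)^2\bigr)\,dt'$, test it on the two-mode datum with $\widehat{\phi}(N)=\widehat{\phi}(1-N)=N^{-s}$, and extract the frequency-$1$ output, where the resonance quantities $2\beta(N-1)$ and $2\eta(N-1)^2$ give a denominator of size $N^2$ and hence a lower bound $N^{-2(s+1)}\to\infty$ for $s<-1$. The only cosmetic difference is that you bound the numerator by the triangle inequality using the decay of $e^{-2\eta(N-1)^2 t_0}$, while the paper bounds the real part of the same quotient; both yield the identical contradiction.
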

A direct corollary of Theorem \ref{malpuestodos} is the next statement.
\begin{teorem}\label{illposed}
The flow map data-solution for the Chen-Lee equation is not $C^2$ from $H^s(\mathbb{T})$ to $H^s(\mathbb{T})$, if $s<-1$.
\end{teorem}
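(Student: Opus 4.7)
The plan is to deduce Theorem \ref{illposed} directly from Theorem \ref{malpuestodos}, which is already available in the paper. The two statements differ only in formulation: Theorem \ref{malpuestodos} rules out the existence of \emph{any} $T > 0$ and unique local solution on $[0,T]$ whose flow map is $C^2$-differentiable at the origin from $H^s(\mathbb{T})$ to $H^s(\mathbb{T})$, whereas Theorem \ref{illposed} flatly asserts that the flow map between these spaces fails to be $C^2$. The reduction is therefore tautological, and it is naturally presented as a short contradiction argument.

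Concretely, I would argue as follows. Suppose that for some $s < -1$ the data-to-solution correspondence $\phi \mapsto u(t)$ defines a $C^2$ map from $H^s(\mathbb{T})$ to $H^s(\mathbb{T})$. Then, by the very notion of a local flow map, there must exist some time $T > 0$, an open neighborhood $\mathcal{U}$ of $0$ in $H^s(\mathbb{T})$, and for each $\phi \in \mathcal{U}$ a unique solution $u \in C([0,T];H^s(\mathbb{T}))$ of the integral equation (\ref{intequation}), with the correspondence $C^2$-differentiable at $\phi = 0$ for every $t \in [0,T]$. This configuration is precisely what Theorem \ref{malpuestodos} forbids, producing the required contradiction.

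Since the deduction is immediate, I do not anticipate any obstacle specific to Theorem \ref{illposed}: all of the substantive work is already packaged inside Theorem \ref{malpuestodos}. For context, that theorem is proved along the Bourgain/Kenig--Ponce--Vega lines: if the flow map were $C^2$ at the origin, its second Fréchet derivative evaluated at $(\phi,\phi)$ would coincide with the second Picard iterate
\[
B(\phi)(t) = -\int_0^t S(t-t')\bigl[(S(t')\phi)\,(S(t')\phi)_x\bigr]\,dt',
\]
and would therefore have to satisfy a continuous bilinear estimate $\|B(\phi)(t)\|_{H^s} \lesssim \|\phi\|_{H^s}^2$. The real difficulty, handled inside Theorem \ref{malpuestodos}, is to exhibit a sequence of high-frequency initial data against which this estimate breaks down when $s < -1$, taking into account that the dissipative symbol $p(k) = \eta(k^2 - |k|)$ provides some smoothing from $S(t)$. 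Once this obstruction is in hand, Theorem \ref{illposed} follows at once from the reduction outlined above.
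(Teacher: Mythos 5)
Your proposal is correct and matches the paper, which presents Theorem \ref{illposed} as a direct corollary of Theorem \ref{malpuestodos} with no further argument; your contradiction reduction simply spells out that implication. Your contextual sketch of the underlying obstruction (the failure of the bilinear estimate for the second Picard iterate against high-frequency data) also accurately reflects how the paper proves Theorem \ref{malpuestodos} via Theorem \ref{TIP1}.
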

The layout of this paper is organized as follows: Section $2$ presents some linear estimates. Section $3$ is devoted to establishing a bilinear estimate in the space $X_{T}^s$. Theorems \ref{mainresult} and \ref{globalresult} will be proved in Section $4$, and finally, the proof of the Theorem \ref{malpuestodos} will be done in Section $5$.
\setcounter{equation}{0}
\section{ Linear Estimates }
We start giving the following estimates.

\begin{lema}\label{LLE0}
Let $\lambda>0$, $\eta>0$ and $t>0$ be given. Then
$$\left\||tk^2|^{\lambda} e^{\eta\left(|k|-k^2 \right)t}\right\|_{l^{\infty}(\mathbb{Z})}\lesssim_{\lambda}\left(t^{\lambda}+\eta^{-\lambda}\right) e^{\frac{\eta}{8}\left(t+t^{\frac{1}{2}}\sqrt{t+\frac{16\lambda}{\eta}}\right)}.$$
\end{lema}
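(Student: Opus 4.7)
\textbf{Proof plan for Lemma~\ref{LLE0}.} The idea is elementary one-variable calculus: replace the discrete variable $|k|\in\mathbb{Z}_{\geq 0}$ by a continuous $y\geq 0$ and find the maximum of
\[
F(y)\;:=\;(ty^{2})^{\lambda}\,e^{\eta(y-y^{2})t}.
\]
Taking the logarithmic derivative,
\[
\frac{F'(y)}{F(y)}\;=\;\frac{2\lambda}{y}+\eta t-2\eta t y,
\]
so the unique positive critical point $y^{*}$ satisfies $2\eta t\,y^{*2}-\eta t\,y^{*}=2\lambda$, i.e.
\[
y^{*}\;=\;\frac{1}{4}\Bigl(1+\sqrt{1+\tfrac{16\lambda}{\eta t}}\,\Bigr).
\]
Since $F$ vanishes at $y=0$ and decays to $0$ as $y\to\infty$, this $y^{*}$ is the global maximizer on $[0,\infty)$, and consequently $\|F\|_{\ell^{\infty}(\mathbb{Z})}\leq F(y^{*})$.

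Next I would substitute $y^{*}$ back into the two factors of $F$ separately. For the exponential factor, the critical-point identity gives $\eta t\,y^{*2}=\lambda+\tfrac{1}{2}\eta t\,y^{*}$, whence
\[
\eta t\bigl(y^{*}-y^{*2}\bigr)\;=\;\tfrac{1}{2}\eta t\,y^{*}-\lambda
\;=\;\frac{\eta}{8}\Bigl(t+t^{1/2}\sqrt{t+\tfrac{16\lambda}{\eta}}\,\Bigr)-\lambda,
\]
which is exactly the exponent appearing on the right-hand side (up to the harmless constant $e^{-\lambda}$). For the polynomial factor $(ty^{*2})^{\lambda}$, I use the critical-point equation $2\eta t\,y^{*2}=2\lambda+\eta t\,y^{*}$ together with the trivial inequality $y^{*}\leq y^{*2}+\tfrac14$ to obtain
\[
2\eta t\,y^{*2}\;\leq\;2\lambda+\eta t\,y^{*2}+\tfrac{\eta t}{4},\qquad\text{hence}\qquad ty^{*2}\;\leq\;\tfrac{2\lambda}{\eta}+\tfrac{t}{4}.
\]
Raising to the $\lambda$-th power yields $(ty^{*2})^{\lambda}\lesssim_{\lambda}\eta^{-\lambda}+t^{\lambda}$.

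Combining these two bounds with $\|F\|_{\ell^{\infty}(\mathbb{Z})}\leq F(y^{*})$ gives the claimed estimate. There is no real obstacle here; the only point requiring care is that the critical-point value of $\eta t(y-y^{2})$ must be algebraically matched with the funny expression $\frac{\eta}{8}(t+t^{1/2}\sqrt{t+16\lambda/\eta})$ in the statement, and that the polynomial prefactor $(ty^{*2})^{\lambda}$ be dominated by a sum $t^{\lambda}+\eta^{-\lambda}$ rather than by their product, which is precisely what the inequality $y^{*}\leq y^{*2}+1/4$ delivers.
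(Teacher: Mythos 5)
Your argument is correct and is essentially the paper's own proof: the paper also reduces to maximizing the continuous function (after substituting $x=|k|t^{1/2}$, so its critical point $x_1=\tfrac14\bigl(t^{1/2}+\sqrt{t+16\lambda/\eta}\bigr)$ is exactly your $t^{1/2}y^{*}$) and then asserts the bound $w_t(x_1)\lesssim_\lambda (t^\lambda+\eta^{-\lambda})e^{\frac{\eta}{8}(t+t^{1/2}\sqrt{t+16\lambda/\eta})}$. Your use of the critical-point identity to match the exponent and the inequality $y^{*}\leq y^{*2}+\tfrac14$ to split the prefactor simply supplies the algebra the paper leaves implicit.
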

\begin{proof}
We have the following inequality
$$|tk^2|^{\lambda} e^{\eta\left(|k|-k^2 \right)t}\leq  \sup_{x\in \mathbb{R}} |x|^{2\lambda} e^{\eta\left(|x|t^{1/2}-x^2 \right)}, \qquad \forall k\in \mathbb{Z}.$$
Let $w_t(x)= x^{2\lambda} e^{\eta\left(xt^{1/2}-x^2 \right)}$, for all $x\geq 0$. Note that $w_t(x)$ tends to $0$ as $x\to \infty$, and  
$$w_t'(x_1)=0 \qquad \Longleftrightarrow \qquad  x_{1}=\frac{1}{4}\left(t^{\frac{1}{2}}+\sqrt{t+\frac{16\lambda}{\eta}}\right).$$
Therefore, the maximum of $w_t$ is attained in $x_1$. So, we can obtain
$$w_t(x_1)\lesssim_{\lambda} \left(t^{\lambda}+\eta^{-\lambda}\right) e^{\frac{\eta}{8}\left(t+t^{\frac{1}{2}}\sqrt{t+\frac{16\lambda}{\eta}}\right)}.$$
This inequality completes the proof.
\end{proof}
\begin{lema}\label{LLE1}
Let $\lambda\geq 0$, $\eta>0$ and $t>0$ be given. Then 
$$\left\||k|^{\lambda} e^{\eta\left(|k|-k^2 \right)t}\right\|_{l^2(\mathbb{Z})}\lesssim_{\lambda} \Upsilon_{\eta}^{\lambda}(t),$$
where
$$\Upsilon_{\eta}^{\lambda}(t):=1+\frac{1}{(\eta t)^{\frac{\lambda}{2}}}+\frac{1}{(\eta t)^{\frac{1+2\lambda}{4}}}.$$
\end{lema}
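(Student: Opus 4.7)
The plan is to square the $l^2$ norm, write it as a sum over $k\in\mathbb{Z}$, and split at a scale where the exponential transitions from being $\mathcal{O}(1)$ to providing Gaussian decay. Concretely, I would start from
$$\left\||k|^{\lambda} e^{\eta(|k|-k^2)t}\right\|_{l^2(\mathbb{Z})}^2 = \sum_{k\in \mathbb{Z}} |k|^{2\lambda} e^{2\eta(|k|-k^2)t}$$
and split into $|k|\leq 1$ and $|k|\geq 2$. On the first region the exponent is non-positive and at most $0$, so the contribution is bounded by a constant depending only on $\lambda$. On the second region I would use the elementary observation $k^2-|k|=|k|(|k|-1)\geq \tfrac{1}{2}k^2$, which turns the weight into a clean Gaussian: $e^{2\eta(|k|-k^2)t}\leq e^{-\eta t\, k^2}$.

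Next I would bound the tail by an integral: $\sum_{|k|\geq 2}|k|^{2\lambda}e^{-\eta t\, k^2}\lesssim \int_{1}^{\infty} x^{2\lambda} e^{-\eta t\, x^2}\,dx$. Performing the substitution $u=x\sqrt{\eta t}$ gives
$$\int_{1}^{\infty} x^{2\lambda} e^{-\eta t\, x^2}\,dx = (\eta t)^{-\lambda-\frac{1}{2}}\int_{\sqrt{\eta t}}^{\infty} u^{2\lambda} e^{-u^2}\,du \lesssim_{\lambda} (\eta t)^{-\lambda-\frac{1}{2}},$$
since the remaining integral is controlled by $\tfrac{1}{2}\Gamma(\lambda+\tfrac{1}{2})$. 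Combining the two regions gives
$$\left\||k|^{\lambda} e^{\eta(|k|-k^2)t}\right\|_{l^2(\mathbb{Z})}^2 \lesssim_{\lambda} 1+(\eta t)^{-\lambda-\frac{1}{2}},$$
and taking square roots yields $1+(\eta t)^{-(1+2\lambda)/4}$, which is majorised by $\Upsilon_{\eta}^{\lambda}(t)$ (the middle term $(\eta t)^{-\lambda/2}$ there is in fact redundant, being dominated by $1$ when $\eta t\geq 1$ and by $(\eta t)^{-(1+2\lambda)/4}$ when $\eta t<1$).

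The only subtle point is the sum-to-integral passage when $\lambda>0$, since the integrand $x^{2\lambda}e^{-\eta t\, x^2}$ is not monotone on $[1,\infty)$: it has its maximum at $x_\star=\sqrt{\lambda/(\eta t)}$. I would handle this either by splitting the sum at $x_\star$ and comparing each monotone piece with the corresponding integral, or, more slickly, by factoring
$$|k|^{2\lambda} e^{-\eta t\, k^2} \leq \Bigl(\sup_{x\geq 0} x^{2\lambda} e^{-\eta t\, x^2/2}\Bigr)\, e^{-\eta t\, k^2/2},$$
where the supremum is $\lesssim_{\lambda}(\eta t)^{-\lambda}$ and the remaining sum of Gaussians is $\lesssim (\eta t)^{-1/2}$. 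This reproduces the $(\eta t)^{-\lambda-1/2}$ factor above and constitutes the main (mild) technical step in the argument.
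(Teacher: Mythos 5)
Your proposal is correct and follows essentially the same route as the paper: reduce to the Gaussian weight via $k^2-|k|\geq \tfrac12 k^2$ for $|k|\geq 2$, then compare the sum to $\int_1^\infty x^{2\lambda}e^{-\eta t x^2}\,dx$, taking care of the non-monotonicity of the summand (the paper does this by the same case split at $x_{\max}=\sqrt{\lambda/(\eta t)}$ that you mention, which is precisely where its extra $(\eta t)^{-\lambda}$ term comes from). Your sup-factorization variant absorbs that term into $(\eta t)^{-\lambda-1/2}$ and your observation that the middle term of $\Upsilon_\eta^\lambda$ is redundant is accurate, so you in fact obtain a slightly cleaner bound that still implies the stated one.
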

\begin{proof}
From the fact that $\eta\left(|k|-k^2\right)t\leq -\frac{\eta k^2t}{2}$, for all $k\in \mathbb{Z}$ with $|k|\geq 2$, we deduce 
\begin{equation}\label{LE1}
\left\||k|^{\lambda} e^{\eta\left(|k|-k^2 \right)t}\right\|_{l^2(\mathbb{Z})}^2=\sum_{k=-\infty}^{\infty} |k|^{2\lambda} e^{2\eta \left(|k|-k^2\right)t}\lesssim 1+\sum_{k=2}^{\infty} k^{2\lambda} e^{-\eta k^2t}.
\end{equation}
Let $h\left(x\right):=x^{2\lambda} e^{-\eta x^2t}$ for all $x > 0$. We observe that  
\begin{equation}
h'\left(x\right)= 2\left(\lambda-\eta t x^2\right)x^{2\lambda-1}e^{-\eta x^2 t}. \label{LE2}
\end{equation}
Thus, $h(x)$ reaches its maximum value when $x_{\max}=\sqrt{\frac{\lambda}{\eta t}}$. If $x_{\max}\leq 1$, \eqref{LE2} implies that $h$ is a nonincreasing function on the interval $[1,\infty )$. Hence, with the change of variable $u=\eta x^2t$, we obtain
\begin{align}
\sum_{k=2}^{\infty}k^{2\lambda} e^{-\eta k^2t}\leq \int_{1}^{\infty} x^{2\lambda}e^{-\eta x^2 t} dx &= \frac{1}{2}\left(\frac{1}{\eta t}\right)^{\frac{1+2\lambda}{2}}\int_{\eta t}^{\infty} u^{\frac{2\lambda-1}{2}}e^{-u} dx \nonumber \\
 & \leq \frac{1}{2} \left(\frac{1}{\eta t}\right)^{\frac{1+2\lambda}{2}}\Gamma\left(\frac{1+2\lambda}{2}\right). \label{LE3}
\end{align}

On the other hand, if $x_{\max}>1$, we have from \eqref{LE2} that $h(x)$ is a nondecreasing function on the interval $[1,x_{\max})$ and nonincreasing on the interval $\left(x_{\max},\infty \right)$, but this implies arguing as above that  
\begin{align}
\sum_{k=2}^{\infty} k^{2\lambda} e^{-\eta k^2t} &\leq \left(\frac{\lambda}{\eta t}\right)^{\lambda}e^{-\lambda}+\int_{2}^{\infty} x^{2\lambda}e^{-\eta x^2 t}dx \nonumber\\
 & \leq \left(\frac{\lambda}{\eta t}\right)^{\lambda}e^{-s\lambda}+\left(\frac{1}{\eta t}\right)^{\frac{1+2\lambda}{2}}\Gamma\left(\frac{1+2\lambda}{2}\right). \label{LE4}
\end{align}
Combining \eqref{LE3} and \eqref{LE4}, and taking square root of the resulting expression, we can conclude the lemma.
\end{proof}
We have the next linear estimates
\begin{prop} \label{PROP1} 
Let $0< T\leq 1$, $\eta>0$, $s \in \mathbb{R}$ and $\phi\in H^s(\mathbb{T})$. Then 
\begin{equation}\label{LE5}
\sup_{t\in[0,T]}\left\|S(t)\phi\right\|_{H^s}\leq \left\|\phi\right\|_{H^s}.
\end{equation}
Moreover, if $s< 0$,
\begin{equation}\label{LE6}
\sup_{t\in[0,T]}t^{\frac{|s|}{2}}\left\|S(t)\phi\right\|_{L^2}\lesssim_{s}f_{s,\eta}(T)\left\|\phi\right\|_{H^s},
\end{equation}
where $$f_{s,\eta}(t)=1+ \left(t^{\frac{|s|}{2}}+\eta^{-\frac{|s|}{2}}\right) e^{\frac{\eta}{8}\left(t+t^{\frac{1}{2}}\sqrt{t+\frac{8|s|}{\eta}}\right)},$$
is a nondecreasing function on $[0,1]$.
\end{prop}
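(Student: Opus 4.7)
The plan is to pass to the Fourier side and exploit the explicit symbol of the semigroup, namely $\widehat{S(t)\phi}(k)=e^{iq(k)t-p(k)t}\hat\phi(k)$. The key observation, specific to the periodic setting, is that the dissipative symbol $p(k)=\eta(k^2-|k|)$ is nonnegative for every $k\in\mathbb{Z}$: each nonzero integer satisfies $k^2\ge|k|$, while $p(0)=0$. On $\mathbb{R}$ this fails for $0<|\xi|<1$ and would force a separate treatment of low frequencies.

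Given this nonnegativity, estimate \eqref{LE5} is immediate from Plancherel, since
$$\|S(t)\phi\|_{H^s}^2=2\pi\sum_{k\in\mathbb{Z}}\langle k\rangle^{2s}e^{-2p(k)t}|\hat\phi(k)|^2,$$
and the factor $e^{-2p(k)t}\le 1$ can be dropped uniformly in $t\in[0,T]$.

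For \eqref{LE6}, I will factor out an $\ell^\infty$-supremum in the Fourier variable:
$$t^{|s|/2}\|S(t)\phi\|_{L^2}\le\bigl\|t^{|s|/2}\langle k\rangle^{|s|}e^{-p(k)t}\bigr\|_{\ell^\infty(\mathbb{Z})}\|\phi\|_{H^s},$$
reducing the task to bounding the $\ell^\infty$ factor by $f_{s,\eta}(t)$. I will split $\langle k\rangle^{|s|}\lesssim_{s}1+|k|^{|s|}$. The first piece yields $t^{|s|/2}\le 1$ (using $T\le 1$), which is absorbed by the leading $1$ in the definition of $f_{s,\eta}$. For the second piece, rewrite $t^{|s|/2}|k|^{|s|}=|tk^2|^{|s|/2}$ and invoke Lemma~\ref{LLE0} with $\lambda=|s|/2$; this produces exactly the remaining term $(t^{|s|/2}+\eta^{-|s|/2})e^{\frac{\eta}{8}(t+t^{1/2}\sqrt{t+8|s|/\eta})}$ in the definition of $f_{s,\eta}(t)$.

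The monotonicity claim is routine: on $[0,1]$ each of $t^{|s|/2}$, $t$ and $t^{1/2}\sqrt{t+8|s|/\eta}$ is nondecreasing, and composition with the monotone exponential gives the assertion, so $\sup_{t\in[0,T]}$ can be replaced by evaluation at $T$. I do not anticipate any real obstacle; the entire argument is essentially a bookkeeping exercise once Lemma~\ref{LLE0} is available and once one notices that $p\ge 0$ on $\mathbb{Z}$. If anything, the ``hard part'' is simply this last observation about the sign of $p$.
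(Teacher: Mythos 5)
Your proof is correct and follows essentially the same route as the paper: both establish \eqref{LE5} by noting $p(k)=\eta(k^2-|k|)\ge 0$ on $\mathbb{Z}$ and dropping the factor $e^{-p(k)t}\le 1$, and both reduce \eqref{LE6} to an $\ell^\infty$ bound on $t^{|s|/2}\langle k\rangle^{|s|}e^{-p(k)t}$, split off the constant piece using $t\le 1$, and apply Lemma~\ref{LLE0} with $\lambda=|s|/2$ to the piece $(tk^2)^{|s|/2}e^{\eta(|k|-k^2)t}$. The only cosmetic difference is that the paper first absorbs $t^{|s|/2}\langle k\rangle^{|s|}$ into $\langle t^{1/2}k\rangle^{|s|}$ before splitting, which is equivalent to your decomposition.
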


\begin{proof}
Since $\eta \left(|k|-k^2\right)t\leq 0$ for every $k\in \mathbb{Z}$ and $t\geq 0$, we see that
\begin{equation}\label{LE6a}
\left\|S(t)\phi\right\|_{H^s} = (2\pi)^{\frac{1}{2}}\left\|\langle k \rangle^s e^{\eta(|k|-k^2)t}\widehat{\phi}(k)\right\|_{l^2(\mathbb{Z})}\leq \left\|\phi\right\|_{H^s}.
\end{equation} 
\eqref{LE6a} implies inequality \eqref{LE5}. To prove \eqref{LE6}, we assume that $s< 0$. Since $0 \leq T \leq 1$, we have
 $$t\leq \frac{(1+k^2t)}{(1+k^2)}, \text{ for all }k\in \mathbb{Z}, \ t\in [0,T].$$ 
So, it follows that
 \begin{equation}\label{LE7}
t^{|s|/2}\left\|S(t)\phi\right\|_{L^2} \leq  \left\|\left\langle t^{1/2}k\right\rangle^{|s|} e^{\eta\left(|k|-k^2\right)t}\right\|_{l^{\infty}(\mathbb{Z})} \left\|\phi\right\|_{H^s}.
\end{equation}
Using Lemma \ref{LLE0} we obtain
\begin{align}\label{LE8}
\left\langle  t^{1/2}k\right\rangle^{|s|} e^{\eta\left(|k|-k^2\right)t} & \lesssim_{s} 1+(tk^2)^{\frac{|s|}{2}}  e^{\eta\left(|k|-k^2\right)t} \\
& \lesssim_{s} 1+ \left(t^{\frac{|s|}{2}}+\eta^{-\frac{|s|}{2}}\right) e^{\frac{\eta}{8}\left(t+t^{\frac{1}{2}}\sqrt{t+\frac{8|s|}{\eta}}\right)}.
\end{align}
Therefore, we conclude \eqref{LE6} from \eqref{LE7} and \eqref{LE8}.
\end{proof}
\section{Bilinear estimate}

In this section, we establish the crucial bilinear estimates.

\begin{prop}\label{PROP2} Let $0\leq T\leq 1$ and $-\frac{1}{2}<s < 0$, then
\begin{equation}
\left\|\int_{0}^t S(t-t')\partial_x(uv)(t') \ dt'\right\|_{X_T^s} \lesssim_{s,\eta} T^{\frac{1+2s}{4}}\left\|u\right\|_{X_T^s}\left\|v\right\|_{X_T^s},
\end{equation}
for all $u,v\in X_T^s$
\end{prop}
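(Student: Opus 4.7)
The plan is to split the $X_T^s$ norm into its two constituent pieces and estimate each separately, using Minkowski in time and Hölder in frequency. Write $I(t) = \int_0^t S(t-t')\partial_x(uv)(t')\,dt'$. By the definition \eqref{spacexts}, I need to bound
\begin{equation*}
\sup_{t\in (0,T]}\nor{I(t)}{H^s} \quad \text{and} \quad \sup_{t\in (0,T]} t^{|s|/2}\nor{I(t)}{L^2}.
\end{equation*}
For both quantities I will pull the norm inside the time integral via Minkowski, then move to the Fourier side where $\widehat{S(t-t')\partial_x(uv)}(k) = ik\,e^{iq(k)(t-t')-p(k)(t-t')}\widehat{uv}(t',k)$, and split the resulting $l^2$ norm using $\nor{fg}{l^2(\mathbb{Z})} \le \nor{f}{l^2(\mathbb{Z})}\nor{g}{l^\infty(\mathbb{Z})}$.

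The key idea is that although $u,v$ are only in $H^s$ with $s<0$, their product is better behaved on the Fourier side in the sense $\nor{\widehat{uv}}{l^\infty(\mathbb{Z})} \lesssim \nor{uv}{L^1(\mathbb{T})} \le \nor{u}{L^2}\nor{v}{L^2}$, and the $L^2$ factors are controlled by $\nor{u}{X_T^s}\nor{v}{X_T^s}$ at the cost of a factor $(t')^{-|s|}$ coming from the weight $t^{|s|/2}$ in the norm. Therefore, for the $H^s$ piece I obtain
\begin{equation*}
\nor{I(t)}{H^s} \lesssim \int_0^t \bigl\|\langle k \rangle^s |k|\, e^{-\eta(k^2-|k|)(t-t')}\bigr\|_{l^2(\mathbb{Z})} (t')^{-|s|}\, dt' \,\nor{u}{X_T^s}\nor{v}{X_T^s},
\end{equation*}
and since $\langle k \rangle^s |k| \lesssim |k|^{s+1}$ for $|k|\ge 1$ (while $k=0$ contributes nothing), Lemma \ref{LLE1} with $\lambda = s+1 > 1/2$ bounds the $l^2$ norm above by $\Upsilon_\eta^{s+1}(t-t')$. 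An analogous computation for the $L^2$ piece uses Lemma \ref{LLE1} with $\lambda = 1$, giving $\Upsilon_\eta^1(t-t')$.

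What remains is to evaluate integrals of beta-function type. The dominant singularity in $\Upsilon_\eta^{s+1}(\tau)$ near $\tau=0$ is $\tau^{-(3+2s)/4}$, and the substitution $t'=t\sigma$ yields
\begin{equation*}
\int_0^t (t-t')^{-(3+2s)/4}(t')^s\,dt' = t^{(1+2s)/4}\, B\!\left(\tfrac{1-2s}{4},\,1+s\right),
\end{equation*}
which converges precisely because $-1/2 < s < 0 < 1/2$ makes both beta arguments positive. The sub-dominant terms in $\Upsilon_\eta^{s+1}$ produce strictly larger powers of $t$, hence give smaller contributions for $T\le 1$. The $L^2$ piece is handled identically: the dominant singularity $\tau^{-3/4}$ yields $t^{1/4+s}$, and multiplication by the weight $t^{|s|/2}=t^{-s/2}$ again produces $t^{(1+2s)/4}$. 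Collecting both parts gives the claimed bound with constant depending on $s$ and $\eta$.

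The main obstacle is bookkeeping the exponent conditions: I need simultaneously that the multiplier norm $\Upsilon_\eta^\lambda$ be integrable in time near $t'=t$ (giving $s<1/2$), that the weight $(t')^s$ be integrable near $t'=0$ (giving $s>-1$), and that the resulting power $t^{(1+2s)/4}$ be nonnegative so the bound is useful for a contraction argument on small $T$ (giving $s>-1/2$). The threshold $s > -1/2$ therefore emerges only at the last step as the sharp condition for the smallness in $T$, while the intermediate estimates remain finite in a wider range. Verifying that every sub-dominant term in $\Upsilon_\eta^{s+1}$ and $\Upsilon_\eta^1$ indeed contributes a power of $T$ no smaller than $T^{(1+2s)/4}$ is routine but must be checked systematically.
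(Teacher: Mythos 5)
Your proposal is correct and follows essentially the same route as the paper's proof: split the $X_T^s$ norm into its $H^s$ and weighted $L^2$ pieces, apply Minkowski in time and the $l^2$--$l^\infty$ H\"older/Young splitting on the Fourier side (your bound $\nor{\widehat{uv}}{l^{\infty}}\le \nor{u}{L^2}\nor{v}{L^2}$ is exactly the paper's \eqref{BE1}), invoke Lemma \ref{LLE1} with $\lambda=1+s$ and $\lambda=1$, and evaluate the resulting beta-type integrals after the substitution $t'=t\sigma$. The exponent bookkeeping you describe, including the verification that the sub-dominant terms of $\Upsilon_\eta^{1+s}$ and $\Upsilon_\eta^{1}$ give powers of $T$ at least $T^{(1+2s)/4}$ when $T\le 1$, matches \eqref{BE3} and \eqref{BE4}.
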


\begin{proof}
Since $s< 0$, it follows that $(1+k^2)^{ \frac{s}{2}}\leq |k|^s$, for all integer $k$ different from zero. So, we deduce that
\begin{equation}\label{BE0}
\begin{aligned}
&  \left\|\int_{0}^t S(t-t')\partial_x(uv)(t') \ dt'\right\|_{H^s} \\
& \hspace{30pt} \leq (2\pi)^{1/2} \int_{0}^t \left\|\left\langle k \right\rangle^s e^{\eta\left(|k|-k^2\right)(t-t')} \left(\partial_x(uv)(t')\right)^{\wedge}(k) \right\|_{l^2(\mathbb{Z})} \ dt' \\
& \hspace{30pt} \leq (2\pi)^{1/2} \int_{0}^t \left\||k|^{1+s}e^{\eta\left(|k|-k^2\right)(t-t')}\right\|_{l^2(\mathbb{Z})}\left\| \widehat{u(t')}\ast \widehat{v(t')}(k) \right\|_{l^{\infty}(\mathbb{Z})} \ dt'. 
\end{aligned}
\end{equation}
The Young inequality implies that
\begin{equation}\label{BE1}
\left\| \widehat{u(t')}\ast\widehat{v(t')}(k) \right\|_{l^{\infty}(\mathbb{Z})}\leq \frac{1}{2\pi}\left( \frac{\left\|u\right\|_{X_T^s}\left\|v\right\|_{X_T^s}}{|t'|^{|s|}} \right),
\end{equation}
hence, we obtain
\begin{equation}\label{BE2}
\begin{aligned}
& \left\|\int_{0}^t S(t-t')\partial_x(uv)(t') \ dt'\right\|_{H^s}\\
&\hspace{30pt}\lesssim  \int_{0}^t  \frac{\left\||k|^{1+s}e^{\eta\left(|k|-k^2\right)( t-t')}\right\|_{l^2(\mathbb{Z})}}{|t'|^{|s|}}\ dt' \;\left\|u\right\|_{X_T^s}\left\|v\right\|_{X_T^s} .
\end{aligned}
\end{equation}
To estimate the integral on the right-hand side of \eqref{BE2}, we have from Lemma \ref{LLE1} that
\begin{equation}
\left\||k|^{s+1}e^{\eta\left(|k|-k^2\right)t}\right\|_{l^2(\mathbb{Z})}\lesssim_{s}\left(1+\frac{1}{(\eta t)^{\frac{s+1}{2}}}+\frac{1}{(\eta t)^{\frac{2s+3}{4}}}\right), \forall t>0.
\end{equation}
So, from \eqref{BE1}, \eqref{BE2} and taking $z=t'/t$ we get that 
\begin{equation}\label{BE3}
\begin{aligned}
& \left\|\int_{0}^t S(t-t')\partial_x(uv)(t') \ dt'\right\|_{H^s}  \\
& \quad \lesssim_{s}\left(\frac{t^{1+s}}{1+s}+\frac{t^{\frac{1+s}{2}}}{{\eta^{\frac{1+s}{2}}}}\int_{0}^1 z^s|1-z|^{-(\frac{1+s}{2})}dz+
\frac{t^{\frac{1+2s}{4}}}{{\eta^{\frac{3+2s}{4}}}}\int_{0}^1 z^s|1-z|^{-(\frac{3+2s}{4})}dz \right)\\
&\quad \quad \quad \cdot \left\|u\right\|_{X_T^s}\left\|v\right\|_{X_T^s} \\
& \quad \lesssim_{s}\left(1+\frac{1}{\eta^{\frac{1+s}{2}}}+\frac{1}{\eta^{\frac{3+2s}{4}}}\right)T^{\frac{1+2s}{4}}\left\|u\right\|_{X_T^s}\left\|v\right\|_{X_T^s}. 
\end{aligned}
\end{equation}
Arguing in a similar way as above, we have for all $0\leq t\leq T$ that
\begin{equation}\label{BE4}
\begin{aligned}
&t^{|s|/2}  \int_{0}^t \left\| S(t-t')\partial_x(uv)(t') \right\|_{L^2(\mathbb{T})}  \ dt' \\
& \quad \lesssim T^{|s|/2} \int_{0}^t  \frac{\left\||k|e^{\eta\left(|k|-k^2\right)(t-t')}\right\|_{l^2(\mathbb{Z})}}{|t'|^{|s|}}\ dt' \; \left\|u\right\|_{X_T^s}\left\|v\right\|_{X_T^s} \\
& \quad \lesssim_s T^{|s|/2} \int_{0}^t \left(\frac{1}{|t'|^{|s|}}+\frac{1}{(\eta (t- t'))^{1/2}|t'|^{|s|}} +\frac{1}{(\eta (t- t'))^{3/4}|t'|^{|s|}}\right) \,dt'\\
&\quad \quad \quad \cdot \left\|u\right\|_{X_T^s}\left\|v\right\|_{X_T^s} \\
&\quad \lesssim_{s} \left(1+\frac{1}{\eta^{1/2}} + \frac{1}{\eta^{3/4}}\right)T^{\frac{1+2s}{4}}\left\|u\right\|_{X_T^s}\left\|v\right\|_{X_T^s}. 
\end{aligned}
\end{equation}
This completes the proof.
\end{proof}

\begin{rem}
If we consider $s'>s>-\frac{1}{2}$. Then modifying the space $X_{T}^{s'}$ by
$$\tilde{X}_{T}^{s'}=\left\{u\in X_{T}^{s'}: \left\|u\right\|_{\tilde{X}_{T}^{s'}}<\infty \right\},$$
where 
$$ \left\|u\right\|_{\tilde{X}_{T}^{s'}}= \left\|u\right\|_{X_{T}^{s'}}+t^{|s|/2} \left\|(1-\partial_x^2)^{\frac{ s'-s}{2}} u\right\|_{L^2}$$
and using that
$$(1+k^2)^{s/2}\lesssim (1+k^2)^{s/2}(1+j^2)^{(s'-s)/2}+(1+k^2)^{s/2}\left(1+(k-j)^2\right)^{(s'-s)/2},$$
for all $k,j\in \mathbb{Z}$, we deduce arguing as in Proposition \eqref{PROP2} that
$$\left\|\int_{0}^t S(t-t')\partial_x(uv)(t') \ dt'\right\|_{\tilde{X}_T^{s'}} \lesssim_{s,\eta} T^{\frac{1+2s}{4}}\left(\left\|u\right\|_{\tilde{X}_T^{s'}}\left\|v\right\|_{X_T^s}+\left\|u\right\|_{X_T^s}\left\|v\right\|_{\tilde{X}_T^{s'}}\right).$$
\end{rem}

\begin{prop}\label{PROP3}
Let $0\leq T \leq 1$, $s \in (-\frac{1}{2},0)$ and $\delta\in [0,s+\frac{1}{2})$, then the application 
$$t\rightarrow \int_{0}^t S(t-t')\partial_x(u^2)(t')\ dt' ,$$
is in $C\left([0,T];H^{s+\delta}(\mathbb{T})\right)$, for every $u\in X_T^s$.
\end{prop}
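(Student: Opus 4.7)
The proof follows the same template as Proposition \ref{PROP2}, but now exploits the smoothing effect of the dissipative semigroup to gain $\delta$ extra derivatives. Writing $F(t) := \int_0^t S(t-t')\partial_x(u^2)(t')\,dt'$ for $0 \le t \le T$, the plan is first to establish the pointwise bound
$$\|F(t)\|_{H^{s+\delta}} \lesssim_{s,\delta,\eta} t^{(1+2s-2\delta)/4}\,\|u\|_{X_T^s}^2 \qquad (0<t\le T),$$
which, since $\delta < s+\tfrac12$ makes the exponent positive, gives $F(t)\to 0$ in $H^{s+\delta}(\mathbb{T})$ as $t\to 0^+$ and hence continuity at $t=0$. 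Continuity at interior times will then follow from a standard semigroup argument.

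For the pointwise bound, I would imitate the chain of inequalities leading to \eqref{BE3}, replacing $H^s$ by $H^{s+\delta}$. Since the $k=0$ contribution vanishes because of $\partial_x$, the inequality $\langle k\rangle^{s+\delta}\lesssim |k|^{s+\delta}$ (valid for $k\ne 0$) reduces the estimate to controlling $\||k|^{1+s+\delta}e^{\eta(|k|-k^2)(t-t')}\|_{\ell^2(\mathbb{Z})}$ multiplied by $\|\widehat{u(t')}\ast\widehat{u(t')}\|_{\ell^\infty(\mathbb{Z})}$. Lemma \ref{LLE1} bounds the former by $\Upsilon_\eta^{1+s+\delta}(t-t')$, while Young's inequality together with the $X_T^s$ norm bounds the latter by $\|u\|_{X_T^s}^2/(2\pi|t'|^{|s|})$. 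The change of variables $z=t'/t$ then converts each of the three contributions of $\Upsilon_\eta^{1+s+\delta}(t-t')$ into a Beta-type integral times a power of $t$; the three resulting exponents are $1+s$, $(1+s-\delta)/2$, and $(1+2s-2\delta)/4$, and a short comparison shows the last one is the smallest and therefore dominant for $t<1$. The chain $\delta<s+\tfrac12<\tfrac12-s$ ensures that all three Beta integrals converge.

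For continuity at an interior point $t_0\in(0,T]$, I would use the semigroup identity
$$F(t)=S(t-t_0)F(t_0)+\int_{t_0}^t S(t-t')\partial_x(u^2)(t')\,dt' \qquad (t\ge t_0),$$
together with its analogue for $t\le t_0$. The first term tends to $F(t_0)$ by strong continuity of $\{S(\tau)\}_{\tau\ge 0}$ on $H^{s+\delta}(\mathbb{T})$, and the remainder vanishes after bounding $|t'|^{-|s|}\le t_0^{-|s|}$ and integrating the mild singularity $(t-t')^{-(3+2s+2\delta)/4}$, whose exponent is strictly less than $1$. The main delicacy is the book-keeping of time exponents: the hypothesis $\delta<s+\tfrac12$ is precisely the threshold at which the dominant exponent $(1+2s-2\delta)/4$ remains positive, and this is exactly what produces the limit $F(t)\to 0$ as $t\to 0^+$ required for membership in $C([0,T];H^{s+\delta}(\mathbb{T}))$.
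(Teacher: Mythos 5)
Your proposal is essentially correct but organizes the argument differently from the paper. The paper fixes $t<\tau$ and bounds $\|F(\tau)-F(t)\|_{H^{s+\delta}}$ directly by two terms: $\mathbb{I}(t,\tau)=\int_0^t\|(S(\tau-t')-S(t-t'))\partial_x(u^2)(t')\|_{H^{s+\delta}}\,dt'$, which is sent to zero by dominated convergence (the integrand is dominated by $(t')^s\,\Upsilon_\eta^{1+s+\delta}(t-t')\in L^1_{t'}(0,t)$ and tends to zero pointwise), and the tail $\mathbb{II}(t,\tau)$ over $[t,\tau]$, estimated by exactly the Beta-integral computation you describe. You instead prove the quantitative smoothing bound $\|F(t)\|_{H^{s+\delta}}\lesssim_{s,\delta,\eta} t^{(1+2s-2\delta)/4}\|u\|_{X_T^s}^2$ (your exponent bookkeeping is right, and the hypothesis $\delta<s+\tfrac{1}{2}$ is indeed what makes the dominant exponent positive), which gives continuity at $t=0$ together with an explicit rate the paper does not state, and then use the reproduction formula $F(t)=S(t-t_0)F(t_0)+\int_{t_0}^tS(t-t')\partial_x(u^2)(t')\,dt'$ at interior points. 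Your route buys a rate at the origin and a clean right-continuity argument; the paper's buys a single uniform argument covering both one-sided limits at every point.

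The one step you are under-arguing is left-continuity at an interior $t_0$. For $t<t_0$ the analogous identity gives $F(t_0)-F(t)=(S(t_0-t)-I)F(t)+\int_t^{t_0}S(t_0-t')\partial_x(u^2)(t')\,dt'$, and strong continuity of the semigroup controls $(S(\tau)-I)g$ only for a \emph{fixed} $g$; here it is applied to the moving family $F(t)$, so some uniformity is required. This is fixable with tools you already have: since $\delta<s+\tfrac{1}{2}$ is strict, choose $\delta'\in(\delta,s+\tfrac{1}{2})$; your pointwise bound with $\delta'$ in place of $\delta$ gives $\sup_{t\in[t_0/2,T]}\|F(t)\|_{H^{s+\delta'}}<\infty$, and the elementary multiplier estimate $|e^{(iq(k)-p(k))\tau}-1|\lesssim(\tau k^2)^{(\delta'-\delta)/2}$ yields $\|(S(\tau)-I)g\|_{H^{s+\delta}}\lesssim\tau^{(\delta'-\delta)/2}\|g\|_{H^{s+\delta'}}$, which closes the argument. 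Alternatively, replace the semigroup-identity step by the paper's dominated-convergence treatment of $\mathbb{I}(t,\tau)$.
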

\begin{proof}
Let $t,\tau \in [0,T]$ be fixed with $t<\tau$. Then, by Minkowski inequality, we see that 
\begin{equation}
\left\|\int_{0}^{\tau} S(\tau-t')\partial_x(u^2)(t')dt'-\int_{0}^t S(t-t')\partial_x(u^2)(t')dt'\right\|_{H^{s+\delta}} \leq \mathbb{I}(t,\tau)+\mathbb{II}(t,\tau),
\end{equation} 
where
\begin{align*}
 \mathbb{I}(t,\tau)&:=\int_{0}^t \left\|\left( S(\tau-t')-S(t-t')\right)\partial_x(u^2)(t')\right\|_{H^{s+\delta}}dt' \nonumber \\
\intertext{and}
\mathbb{II}(t,\tau)&:=\int_{t}^{\tau} \left\|S(\tau-t')\partial_x(u^2)(t')\right\|_{H^{s+\delta}}d t'. \nonumber
\end{align*}
Following the same ideas of the proof of Proposition \ref{PROP2} we obtain
\begin{align} \label{BE5}
 &\mathbb{II}(t,\tau)  \lesssim \int_{t}^{\tau} \left\|\left\langle k \right\rangle^{s+\delta}e^{\eta\left(|k|-k^2\right)(\tau-t')}\left(\partial_{x}u^2(t')\right)^{\wedge}(k)\right\|_{l^2(\mathbb{Z})} \ dt' \nonumber\\
& \lesssim_{s,\delta,\eta} \int_{t}^{\tau}\bigl((t')^s+(t')^s|\tau-t'|^{-\frac{1+s+\delta}{2}}+(t')^s|\tau-t'|^{-\frac{3+2( s+\delta)}{4}}\bigr) \ dt' \;\left\|u\right\|_{X_{T}^s}^2 \nonumber \\
& \lesssim_{s,\delta,\eta} \left(\frac{(\tau-t)^{1+s}}{1+s}+\int_{t}^{\tau} \bigl(|t'-t|^s|\tau-t'|^{-\frac{1+s+\delta}{2}}+|t'-t|^s|\tau-t'|^{-\frac{3+2( s+\delta)}{4}}\bigr) dt'\right) \nonumber \\
& \quad \quad \quad  \cdot \left\|u\right\|_{X_{T}^s}^2. 
\end{align}
But with the change of variable $z=\frac{t'-t}{\tau-t}$, we have that
\begin{align}\label{BE6}
&\int_{t}^{\tau}|t'-t|^s|\tau-t'|^{-\frac{1+s+\delta}{2}} \ dt' = (\tau-t)^{\frac{1+s-\delta}{2}}\int_{0}^{1}z^s|1-z|^{-\frac{1+s+\delta}{2}} \ dz,  \nonumber \\
&\int_{t}^{\tau}|t'-t|^s|\tau-t'|^{-\frac{3+2( s+\delta)}{4}} \ dt' =(\tau-t)^{\frac{1+2(s-\delta)}{4}} \int_{0}^1 z^s|1-z|^{-\frac{3+2( s+\delta)}{4}} \ dz.
\end{align}
Therefore, combining \eqref{BE5}, \eqref{BE6} and the hypothesis, we deduce that \\ $\lim_{\tau \to t}\mathbb{II}(t,\tau)=0$.
To estimate $\mathbb{I}(t,\tau)$, we observe that
\begin{equation}\label{BE7}
\mathbb{I}(t,\tau) \lesssim \int_{0}^{t} \frac{\left\||k|^{1+s+\delta}\left(e^{\left(iq(k)-p(k)\right) (\tau-t')}-e^{\left(iq(k)-p(k)\right)(t-t')}\right)\right\|_{l^2(\mathbb{Z})}}{|t'|^{|s|}} \, dt' \, \left\|u\right\|_{X_{T}^s}^2 .
\end{equation}
Applying Lemma \ref{LLE1}, for all $t'\in [0,t)$, we have that
\begin{align}\label{BE8}
&\left\||k|^{1+s+\delta}\left(e^{\left(iq(k)-p(k)\right)(\tau-t')}-e^{\left(iq(k)-p(k)\right)(t-t')}\right)\right\|_{l^2(\mathbb{Z})} \nonumber\\
&\hspace{30pt} \lesssim \left\||k|^{1+s+\delta}e^{\eta\left(|k|-k^2\right) (t-t')}\right\|_{l^2(\mathbb{Z})}  \nonumber \\
&\hspace{30pt} \lesssim_{s,\delta}\Upsilon_{\eta}^{1+s+\delta}(t-t'),
\end{align}
where $$\Upsilon_{\eta}^{1+s+\delta}(t)=1+\frac{1}{(\eta t)^{\frac{1+s+\delta}{2}}}+\frac{1}{(\eta t)^{\frac{3+2(s+\delta)}{4}}}, \quad \forall t>0.$$
Then, it follows from \eqref{BE8} and Weierstrass M-test that
\begin{equation}\label{BE9}
\lim_{\tau \to t} \ \left\||k|^{1+s+\delta}\left(e^{\left(iq(k)-p(k)\right)(\tau-t')}-e^{\left(iq(k)-p(k)\right)(t-t')}\right)\right\|_{l^2(\mathbb{Z})}=0,
\end{equation}
 for all $t'\in [0,t)$. Moreover, since $(t')^s \Upsilon_{\eta}^{1+s+\delta}(t-t')$ is in $L^1_{t'}(0,t)$, we deduce from \eqref{BE9} and Lebesgue dominated convergence theorem that $\lim_{\tau \to t}\mathbb{I}(t,\tau)=0$. This completes the proof.
\end{proof}

The next lemma is an adaptation of Lemma 2.3.1 in \cite{D} to the periodic case. This result allows us to adapt the above propositions to $C\left([0,T];H^s(\mathbb{T})\right)$, when $s\geq 0$ and $0<T\leq 1$. For the sake of completeness, we will sketch a proof.

\begin{lema}\label{LBE1}
Suppose $a>0$, $r\geq 0$ are real numbers and $\phi,\psi \in H^r(\mathbb{T})$. Then
\begin{equation}
\left\|\left\langle ak \right\rangle^r(\phi \psi)^{\wedge}(k)\right\|_{l^{\infty}(\mathbb{Z})}\leq 2^{\frac{r}{2}} \left\|\left\langle ak \right\rangle^{r}\widehat{\phi}(k)\right\|_{l^2(\mathbb{Z})}\left\|\left\langle ak \right\rangle^{r}\widehat{\psi}(k)\right\|_{l^2(\mathbb{Z})}.
\end{equation}
\end{lema}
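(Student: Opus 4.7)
The plan is to reduce the estimate to a pointwise bound on the convolution kernel combined with Cauchy--Schwarz. First I would write the product in Fourier variables: for every $k\in\mathbb{Z}$,
\[
(\phi\psi)^{\wedge}(k)=\sum_{j\in\mathbb{Z}}\widehat{\phi}(k-j)\widehat{\psi}(j),
\]
which is immediate from expanding $\phi$ and $\psi$ in Fourier series and identifying coefficients. Multiplying by $\langle ak\rangle^{r}$ and taking absolute values,
\[
\langle ak\rangle^{r}|(\phi\psi)^{\wedge}(k)|\le\sum_{j\in\mathbb{Z}}\langle ak\rangle^{r}|\widehat{\phi}(k-j)||\widehat{\psi}(j)|.
\]

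The key step is a Peetre--type bound on the weight: for all $a>0$ and $k,j\in\mathbb{Z}$,
\[
\langle ak\rangle\le\sqrt{2}\,\langle a(k-j)\rangle\,\langle aj\rangle.
\]
I would verify this by squaring both sides, expanding $k=(k-j)+j$ in $\langle ak\rangle^{2}=1+a^{2}k^{2}$, and checking that the difference $2\langle a(k-j)\rangle^{2}\langle aj\rangle^{2}-\langle ak\rangle^{2}$ reduces to the manifestly nonnegative quantity $1+a^{2}(k-2j)^{2}+2a^{4}(k-j)^{2}j^{2}$. Raising to the power $r\ge 0$ yields
\[
\langle ak\rangle^{r}\le 2^{r/2}\langle a(k-j)\rangle^{r}\langle aj\rangle^{r}.
\]

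Plugging this weight splitting into the convolution bound and applying the Cauchy--Schwarz inequality in the $j$ variable gives
\[
\langle ak\rangle^{r}|(\phi\psi)^{\wedge}(k)|\le 2^{r/2}\sum_{j}\bigl(\langle a(k-j)\rangle^{r}|\widehat{\phi}(k-j)|\bigr)\bigl(\langle aj\rangle^{r}|\widehat{\psi}(j)|\bigr)\le 2^{r/2}\,\|\langle a\cdot\rangle^{r}\widehat{\phi}\|_{l^{2}(\mathbb{Z})}\|\langle a\cdot\rangle^{r}\widehat{\psi}\|_{l^{2}(\mathbb{Z})},
\]
where the right-hand side is independent of $k$. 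Taking the supremum over $k\in\mathbb{Z}$ delivers the claimed estimate.

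There is no real obstacle here; the only subtle ingredient is the weight inequality, whose constant $\sqrt{2}$ is precisely what produces the stated constant $2^{r/2}$. The rest is Cauchy--Schwarz and the standard periodic convolution identity for the product.
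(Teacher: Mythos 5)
Your proof is correct and follows essentially the same route as the paper: the convolution identity for the Fourier coefficients of the product, the Peetre-type weight inequality $\langle ak\rangle^{r}\leq 2^{r/2}\langle a(k-j)\rangle^{r}\langle aj\rangle^{r}$, and then the $l^{2}\ast l^{2}\to l^{\infty}$ bound (which the paper labels Young's inequality and you obtain via Cauchy--Schwarz; they coincide here). Your explicit verification of the weight inequality by expanding the square is a nice self-contained touch, but the argument is otherwise identical.
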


\begin{proof}
Since $r\geq 0$, $H^r(\mathbb{T}) \hookrightarrow L^2(\mathbb{T})$. Therefore, $\phi,\psi \in L^2(\mathbb{T})$ and the exchange formula holds,   
$$(\phi\psi)^{\wedge}(k)=\widehat{\phi}\ast \widehat{\psi}(k)= \sum_{j=-\infty}^{\infty}\widehat{\phi}(j)\widehat{\psi}(k-j), \quad \forall k\in \mathbb{Z}.$$
To prove this lemma we will use Peetre's inequality which says that 
$$\left(1+|x|^2\right)^{\rho}\leq 2^{|\rho|}\left(1+|x-y|^2\right)^{|\rho|}\left(1+|y|^2\right)^{\rho}, \quad \forall x,y,\rho\in\mathbb{R}.$$
Thus,
$$\left\langle ak \right\rangle^r \leq 2^{\frac{r}{2}}\left\langle a(k-j)\right\rangle^r\left\langle aj \right\rangle^r, \quad \forall k \in \mathbb{Z}.$$
So, we have by Young's convolutions inequality that
\begin{align*}
\left|\left\langle ak \right\rangle^r(\phi \psi)^{\wedge}(k)\right| &\leq 2^{\frac{r}{2}} \sum_{j=\infty}^{\infty} \left|\left\langle aj \right\rangle^r \widehat{\phi}(j)\left\langle a(k-j)\right\rangle^r \widehat{\psi}(k-j)\right|\ \\
& \leq 2^{\frac{r}{2}}\left\|\left\langle ak \right\rangle^{r}\widehat{\phi}(k)\right\|_{l^2(\mathbb{Z})}\left\|\left\langle ak \right\rangle^{r}\widehat{\psi}(k)\right\|_{l^2(\mathbb{Z})}.
\end{align*}
\end{proof}

\begin{rem}\label{RBE1}
Assuming that $s\geq 0$ and $0<T\leq 1$, we have a similar result as the one obtained in Proposition \ref{PROP2} for the space $C\left([0,T];H^s(\mathbb{T})\right)$. In fact, we have that
$$\left\|\int_{0}^t S(t-t')\partial_x(uv)(t') \ dt'\right\|_{L^{\infty}_tH^s_x} \lesssim_{s,\eta} T^{\frac{1}{4}}\left\|u\right\|_{L^{\infty}_tH^s_x}\left\|v\right\|_{L^{\infty}_tH^s_x},$$
for all $u,v\in C\left([0,T];H^s(\mathbb{T})\right).$
To see this, we can use Lemma \ref{LBE1} with $a=1$ and Lemma \ref{LLE1}.
\begin{align} 
 \int_{0}^{t}  &\left\|S(t-t')\partial_x(uv)(t')\right\|_{H^s}dt'  \nonumber \\
 &\lesssim \int_{0}^t \left\| |k| e^{\eta\left(|k|-k^2\right)(t-t')} \right\|_{l^2(\mathbb{Z})} \left\|\left\langle k \right\rangle^s \left(uv(t')\right)^{\wedge}(k)\right\|_{l^{\infty}(\mathbb{Z})}  \ dt' \nonumber \\
 & \lesssim_s \int_{0}^{t} \left\||k| e^{\eta\left(|k|-k^2\right)(t-t')}\right\|_{l^2(\mathbb{Z})}\left\|u(t')\right\|_{H_x^s}\left\|v(t')\right\|_{H^s_x}\ dt'  \nonumber \\
 & \lesssim_{s,\eta} \int_{0}^{t} \Bigl(1+\frac{1}{(t-t')^{1/2}}+\frac{1}{(t-t')^{3/4}}\Bigr)\, dt' \;\left\|u\right\|_{L^{\infty}_tH^s_x}\left\|v\right\|_{L^{\infty}_tH^s_x} \nonumber \\
 & \lesssim_{s,\eta} T^{\frac{1}{4}}\left\|u\right\|_{L^{\infty}_tH^s_x}\left\|v\right\|_{L^{\infty}_tH^s_x}.
\end{align}
\end{rem}

\begin{rem}\label{RBE2}
Let $s\geq 0$ and $0<T\leq 1$. We have the same result given in Proposition \ref{PROP3}, changing $X_{T}^s$ by $C\left([0,T];H^s(\mathbb{T})\right)$ and taking $\delta\in[0,\frac{1}{2})$. In fact, considering $t,\tau\in [0,T]$ fixed with $t<\tau$, we define the terms $\mathbb{I}(t,\tau)$ and $\mathbb{II}(t,\tau)$ as in Proposition \ref{PROP3}. Then Lemma \ref{LLE1} implies that
\begin{align} 
 \mathbb{II}(t,\tau) & \lesssim \int_{t}^{\tau} \left\||k| \left\langle k \right\rangle^{\delta}e^{\eta\left(|k|-k^2\right)(\tau-t')}\right\|_{l^2(\mathbb{Z})}\left\|\left\langle k \right\rangle^{s}[u^2(t')]^{\wedge}(k)\right\|_{l^{\infty}(\mathbb{Z})}\ dt'  \nonumber \\
& \lesssim_{s,\eta} \int_{t}^{\tau} \left\||k|^{1+\delta}e^{\eta(|k|-k^2)(\tau-t')}\right\|_{l^2(\mathbb{Z})} \left\|u(t')\right\|_{H^s_x}^2\ dt'  \nonumber \\
& \lesssim_{s,\eta,\delta} \int_{t}^{\tau} \Bigl(1+\frac{1}{(\tau-t')^{\frac{1+\delta}{2}}}+\frac{1}{(\tau-t')^{\frac{3+2\delta}{4}}}\Bigr) \, dt' \;\left\|u\right\|_{L^{\infty}_tH^s_x}^2 \nonumber \\ 
& \lesssim_{s,\eta,\delta} \left((\tau-t)+(\tau-t)^{\frac{1-\delta}{2}}+(\tau-t)^{\frac{1}{4}(1-2\delta)} \right)\left\|u \right\|_{L^{\infty}_tH^s_x}^2,
\end{align}
then is clear that $\lim_{\tau \to t} \mathbb{II}(t,\tau)=0$. Also, we observe that,
\begin{equation}
\mathbb{I}(t,\tau) \lesssim_{s,\delta}\int_{0}^{t} \nor{|k|^{1+\delta}\Bigl(e^{\left(iq(k)-p(k)\right)(\tau-t')}-e^{\left(iq(k)-p(k)\right)(t-t')}\Bigr)}{l^2(\mathbb{Z})}dt' \nora{u}{L^{\infty}_tH^s_x}{2},
\end{equation}
and again Lemma \ref{LLE1} implies that
\begin{align*}
&\left\||k|^{1+\delta}\left(e^{\left(iq(k)-p(k)\right)(\tau-t')}-e^{\left(iq(k)-p(k)\right)(t-t')}\right)\right\|_{l^2(\mathbb{Z})} \\ 
&\qquad \lesssim_{s,\delta}\left(1+\frac{1}{\left(\eta(t-t')\right)^{\frac{1+\delta}{2}}}+\frac{1}{\left(\eta(t-t')\right)^{\frac{3+2\delta}{4}}}\right), \ \forall t>0.
\end{align*}
Therefore, using the above inequalities, we can argue as in the proof of Proposition \ref{PROP3} and conclude that $\lim_{\tau \to t} \mathbb{I}(t,\tau)=0$.
\end{rem}

\section{Well-Posedness}

In this section we show that the Cauchy problem \eqref{cl} is locally and globally well-posed in $H^s(\mathbb{T})$ for $s > -\frac{1}{2}$. In fact, to prove the local existence result, we will construct a contraction with the integral formulation  \eqref{intequation}. The principal argument to obtain our desired results is to use the bilinear estimates in Proposition \ref{PROP2} and Remark \ref{RBE1} for the nonlinear part $\partial_x(u^2)$ in the convenient $\mathfrak{X}_T^s$ spaces. We begin this section giving a proof of Theorem \ref{mainresult}

\begin{proof}[Proof of Theorem \ref{mainresult}] For $T\in(0,1]$, we consider the space $\mathfrak{X}_T^s=X_T^s$, if $-\frac{1}{2}< s < 0$, and if $s\geq 0$, we take $\mathfrak{X}_T^s=C\left([0,T];H^s(\mathbb{T})\right)$. We divide the proof in four steps.\\

\emph{1. Existence}. Let $\phi \in H^s(\mathbb{T})$, $s> -\frac{1}{2}$. We define the application
$$\Psi(u)=S(t)\phi-\frac{1}{2}\int_{0}^t S(t-t')\partial_x(u^2(t')) \ dt', \text{ for each } u \in \mathfrak{X}_T^s.$$
By Proposition \ref{PROP1}, together with Proposition \ref{PROP2} when $s< 0$, or Remark \ref{RBE1} when $s\geq 0$, there exists a positive constant $C=C(\eta,s)$, independent of $\beta$, such that for all $u, v \in \mathfrak{X}_T^s$ and $0<T\leq 1$
\begin{align}
\left\|\Psi(u)\right\|_{\mathfrak{X}_T^s} &\leq C\left(\left\|\phi\right\|_s+T^{g(s)}\left\|u\right\|_{\mathfrak{X}_T^s}^2\right), \label{WP1} \\
\left\|\Psi(u)-\Psi(v)\right\|_{\mathfrak{X}_T^s} & \leq C T^{g(s)}\left\|u-v\right\|_{\mathfrak{X}_T^s}\left\|u+v\right\|_{\mathfrak{X}_T^s}, \label{WP2}
\end{align}
where $g(s)=\frac{1}{4}(1+2s)$, for all $s\in (-\frac{1}{2},0)$, and $g(s)=\frac{1}{4}$, if $s\geq 0$. Then, let $E_{T}(\gamma)=\left\{u\in \mathfrak{X}_T^s : \left\|u \right\|_{\mathfrak{X}_T^s} \leq \gamma \right\}$, where $\gamma=2C\left\|\phi \right\|_s$ and \\ $0<T \leq \min \left\{1,\left(4C\gamma\right)^{-\frac{1}{g(s)}} \right\}$. The estimates \eqref{WP1} and \eqref{WP2} imply that $\Psi$ is a contraction on the complete metric space $E_T(\gamma)$. Therefore, we deduce by the Fixed Point Theorem that exists an unique solution $u$ of the integral equation \eqref{intequation} in $E_T(\gamma)$ and with initial data $u(0)=\phi$. \\

\emph{2. Continuous dependence}. Let $\phi_1,\phi_2 \in H^s(\mathbb{T})$ and $u_1\in \mathfrak{X}_{T_1}^s$, $u_2\in \mathfrak{X}_{T_2}^s$ be the respective solutions of the Chen-Lee equation constructed in the subsection of \emph{Existence} above. We recall that the solutions and the times of existence satisfy
\begin{align*}
&\left\|u_i\right\|_{\mathfrak{X}_T^s} \leq 2C\left\|\phi_i\right\|_{H^s}, \\
&0<T_i\leq \min\left\{1, \left(8C^2\left\|\phi_i\right\|_{H^s}\right)^{-\frac{1}{g(s)}}\right\},
\end{align*}
for $i=1,2$, and $C=C(\eta,s)$. Therefore, by Proposition \ref{PROP1}, together with Proposition \ref{PROP2} when $s<0$, or by Remark \ref{RBE1} when $s\geq 0$, we have that for all $T\in \left(0, \min\left\{T_1,T_2\right\}\right]$ 

\begin{align*}
\left\|u_1-u_2\right\|_{\mathfrak{X}_T^s} & \leq C\left\|\phi_1-\phi_2\right\|_{H^s}+ C T^{g(s)}\left\|u_1+u_2\right\|_{\mathfrak{X}_T^s}\left\|u_1-u_2\right\|_{\mathfrak{X}_T^s}  \\
 & \leq C\left\|\phi_1-\phi_2\right\|_{H^s}+ \frac{\left(\left\|\phi_1\right\|_{H^s}+\left\|\phi_2\right\|_{H^s}\right)}{4\max_{i=1,2}\left\{\left\|\phi_i\right\|_{H^s}\right\}}\left\|u_1-u_2\right\|_{\mathfrak{X}_T^s}  \\
&\leq C\left\|\phi_1-\phi_2\right\|_{H^s}+ \frac{1}{2}\left\|u_1-u_2\right\|_{\mathfrak{X}_T^s},
\end{align*}
but this implies that 
$$\left\|u_1(t)-u_2(t)\right\|_{H^s}\leq \left\|u_1-u_2\right\|_{\mathfrak{X}_T^s}\leq 2C\left\|\phi_1-\phi_2\right\|_{H^s}, \, \text{for all } t \in [0,T].$$

\emph{3. Uniqueness}. We shall proof the uniqueness of solutions to the integral equation \eqref{intequation} in the space $\mathfrak{X}_T^s$, where $T$ is defined as in the subsection of \emph{Existence}. Let $u,v \in \mathfrak{X}_T^s$ be solutions of the integral equation \eqref{intequation}  on the time interval $[0,T]$ with the same initial data $\phi$. Arguing as in the proof of Proposition \ref{PROP2} for $-\frac{1}{2}<s< 0$ or as in the Remark \ref{RBE1} when $s\geq 0$, there exists $C=C(\eta,s)$ such that for all $0<T_1\leq T$
\begin{equation}\label{WP3}
\left\|u-v\right\|_{\mathfrak{X}_{T_1}^s}\leq C K T_1^{g(s)}\left\|u-v\right\|_{\mathfrak{X}_{T_1}^s},
\end{equation}
where $K:=\left\|u\right\|_{\mathfrak{X}_{T}^s}+\left\|v\right\|_{\mathfrak{X}_{T}^s}$. Taking $T_1 \in \bigl(0,(CK)^{-\frac{1}{g(s)}}\bigr)$, we deduce from \eqref{WP3} that $u \equiv v$ on $[0,T_1]$. Thus, iterating this argument, we extend the uniqueness result to the whole interval $[0,T]$.\\

 \emph{4. The solution $ u\in C\left((0,T],H^{\infty}(\mathbb{T})\right)$ }. Using Lemma \ref{LLE0} and arguing as in the proof of Proposition 2.2 in \cite{BI}, we have that the map $t\mapsto S(t)\phi$ is continuous in the interval $(0,T]$ with respect to the topology of $H^{\infty}(\mathbb{T})$. Since our solution $u$ is in $\mathfrak{X}_T^s$, we deduce from Proposition \ref{PROP3} or Remark \ref{RBE2}, that there exists $\lambda>0$, such that 
$$u\in C\left([0,T];H^s(\mathbb{T})\right)\cap C\left((0,T];H^{s+\lambda}(\mathbb{T})\right).$$ 
Therefore we can iterate this argument, using the uniqueness result and the fact that the time of existence of solutions depends uniquely on the $H^s(\mathbb{T})$-norm of the initial data. Thus we deduce that 
$$u\in C\left([0,T];H^s(\mathbb{T})\right)\cap C\left((0,T];H^{\infty}(\mathbb{T})\right).$$

\end{proof}

Next, we will give a proof of Theorem \ref{globalresult}

\begin{proof}[Proof of Theorem \ref{globalresult}]
We divide the proof in two steps.\\

\emph{1.} Let $s\geq 0$ and $T^*(\left\|\phi\right\|_s)$ defined as
$$T^*=\sup\left\{T>0: \exists !\text{ solution of \eqref{intequation} in }  C\left([0,T];H^s(\mathbb{T})\right)\right\}.$$
Let $u \in C\left([0,T^*);H^s(\mathbb{T})\right)\cap C\left((0,T^*);H^{\infty}(\mathbb{T})\right)$ be the local solution of the integral equation \eqref{intequation} and defined in the maximal time interval $[0,T^*)$. We will prove that $T^*<\infty$ implies a contradiction. Since $u$ is smooth by Theorem \ref{mainresult}, we have that $u$ solves \eqref{intequation} in a classic sense, therefore we can take the $L^2(\mathbb{T})$ inner product between $u$ and \eqref{intequation} to obtain 
\begin{align*}
\frac{1}{2}\frac{d}{dt}\left\|u(t)\right\|_0^2&=(u,u_t)_0 \\
&=-(u,uu_x)_0-\beta(u,\mathcal{H}u_{xx})_0-\eta(u,\mathcal{H}u_x)_0+\eta(u,u_{xx})_0 \\
&=2\pi\eta\sum_{k=-\infty}^{\infty}\left(|k|-k^2\right)|\Hat{u}(t)|^2 \\
&\leq 0,
\end{align*}
where we have used that $(|k|-k^2)\leq 0$ for all integer $k$. Thus we obtain
\begin{equation}
\left\|u(t)\right\|_0 \leq \left\|\phi\right\|_0 \, \text{for all } t \in [0,T^*).
\end{equation}
Since the time of existence $T(\cdot)$ is a decreasing function of the norm of the initial data, there exists  a time $\widetilde{T}>0$, such that for all $\psi \in H^1(\mathbb{T})$ with $\left\|\psi \right\|_{0}\leq \left\|\phi\right\|_{0}$, there exists a function $\tilde{u}\in C\left([0,\widetilde{T}];H^s(\mathbb{T})\right)$ solution of \eqref{intequation} with $\widetilde{u}(0)=\psi$. Let $0<\epsilon <\widetilde{T}$, applying the above result to $\psi=u(T^*-\epsilon)$, we define
\begin{equation}
v(t)=\begin{cases} 
      u(t), & \text{ when } \, 0 \leq t \leq T^*-\epsilon, \\
     \tilde{u}(t-T^*+\epsilon), &\text{ when } \, T^*-\epsilon \leq t \leq T^*+\widetilde{T}-\epsilon \\
   \end{cases}
\end{equation}
Then $v(t)$ is a solution of the integral equation \eqref{intequation} in $[0,T^*+\widetilde{T}-\epsilon]$, but this contradicts the definition of $T^*$, since $T^*+\widetilde{T}-\epsilon >T^*$. We have concluded the global result when $s\geq 0$.

\emph{2.} Let $s\in (-1/2,0)$, $\phi\in H^s(\mathbb{T})$ and $u\in X_{T}^s$ be the solution of the Cauchy problem \eqref{cl} given in Theorem \ref{mainresult}. Let $T'\in (0,T)$ fixed, we have that
$$
\left\|u\right\|_{X_{T'}^s}=M_{T',s}<\infty.
$$
Since $u\in C\left((0,T];H^{\infty}(\mathbb{T})\right)$, it follows that $u(T')\in L^2(\mathbb{T})$. Thus, the part $(i)$ of Theorem \ref{globalresult} implies that $\tilde{u}$ the solution of the integral equation \ref{intequation} with initial data $u(T')$ is global in time. Moreover, uniqueness implies that $\tilde{u}(t)=u(T'+t)$ for all $t\in [0,T-T']$. Therefore, we deduce that
\begin{align*}
\left\|u\right\|_{X_{T}^s} & \leq \left\|u\right\|_{X_{T'}^s}+\left\|u(T'+\cdot)\right\|_{X_{T-T'}^s} \\
&\leq M_{T',s}+\left\|\tilde{u}\right\|_{X_{T-T'}^s} \\
&= M_{T',s}+ \sup_{t\in [0,T-T']} \left\{\left\|\tilde{u}(t)\right\|_{s}+t^{|s|/2}\left\|\tilde{u}(t)\right\|_{L^2(\mathbb{T})}\right\} \\
& \leq M_{T',s}+ \left(1+(T-T') ^{|s|/2}\right)\sup_{t\in [0,T-T']}\left\|\tilde{u}(t)\right\|_{L^2(\mathbb{T})}. 
\end{align*}
The global result follows from the above estimate when $s\in (-1/2,0)$.
\end{proof}

\section{ Ill-posedness result}

From the Theorem \ref{mainresult}, it is known that $CL$ is locally well-posed for data $\phi \in H^s(\mathbb{T})$, $s>-1/2$. In fact the map data-solution turns out to be smooth. In this section we will prove that one cannot solve Cauchy problem \eqref{cl} applying Picard iterative method on the integral equation \eqref{intequation}, at least in the Sobolev spaces $H^s(\mathbb{T})$, with $s<-1$.  We first prove the next theorem.

\begin{teorem}\label{TIP1}
Let $s<-1$, $\beta,\eta>0$ and $T>0$. Then there does not exist a space $B_T^s$ continuously embedded in $C\left([0,T],H^s(\mathbb{T})\right)$,i.e.
$$\left\|u\right\|_{L_{t}^{\infty}H^s}\lesssim \left\|u\right\|_{B_T^s}, \, \forall u\in B_T^s$$
and such that
\begin{equation}\label{IP1}
\left\|S(t)\phi\right\|_{B_T^s}\lesssim \left\|\phi \right\|_{H^s}, \qquad \forall \phi \in H^s(\mathbb{T}),
\end{equation}
and 
\begin{equation}\label{IP2}
\left\|\int_0^t S(t-t')[u(t')u_x(t')]\ dt'\right\|_{B_T^s}\lesssim \left\|u \right\|_{B_T^s}^2, \, \forall u\in B_T^s.
\end{equation}
\end{teorem}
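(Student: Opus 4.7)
The approach is the standard ill-posedness scheme based on failure of a bilinear estimate in any auxiliary space. Suppose for contradiction that a space $B_T^s$ with the properties \eqref{IP1}--\eqref{IP2} exists. Plugging $u(t)=S(t)\phi$ into \eqref{IP1}, then feeding $u$ into \eqref{IP2}, and finally using the continuous embedding $B_T^s\hookrightarrow C([0,T];H^s(\mathbb{T}))$, I would chain the three inequalities to obtain the a priori bilinear bound
$$\left\|\int_0^t S(t-t')\bigl[(S(t')\phi)\,\partial_x S(t')\phi\bigr]\,dt'\right\|_{H^s}\;\lesssim\; \|\phi\|_{H^s}^2,\qquad \forall\,\phi\in H^s(\mathbb{T}),\ t\in[0,T].$$
It therefore suffices to produce, for each $s<-1$, a family of initial data that violates this inequality.

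\paragraph*{}
My choice is the two-mode ansatz
$$\phi_N(x)\;=\;N^{-s}\bigl(e^{iNx}+e^{-i(N+1)x}\bigr),\qquad N\in\mathbb{N}\ \text{large},$$
normalized so that $\|\phi_N\|_{H^s}\sim 1$ uniformly in $N$. The frequencies $k_1=N$ and $k_2=-(N+1)$ are engineered so that $k_1+k_2=-1$, a fixed low mode at which the $H^s$-weight $\langle -1\rangle^s$ is a harmless constant. Expanding $(S(t')\phi_N)\,\partial_x S(t')\phi_N$ in Fourier, only the mixed term contributes to the frequency $k=-1$, with coefficient
$$-iN^{-2s}\,e^{it'(q(N)+q(-(N+1)))-t'(p(N)+p(-(N+1)))}.$$
Applying $S(t-t')$, which on the $k=-1$ mode multiplies by $e^{-i(t-t')\beta}$ because $q(-1)=-\beta$ and $p(-1)=0$, and integrating in $t'$ on $[0,t]$, the $k=-1$ Fourier coefficient of the Duhamel term equals
$$-iN^{-2s}\,e^{-i\beta t}\cdot\frac{1-e^{-2t(i\beta N+\eta N^2)}}{2(i\beta N+\eta N^2)}.$$

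\paragraph*{}
The crucial algebraic identities behind this formula are
$$q(N)+q(-(N+1))-q(-1)=-2\beta N,\qquad p(N)+p(-(N+1))-p(-1)=2\eta N^2,$$
so the dispersive imbalance grows only linearly while the dissipative sum grows quadratically in $N$. Consequently the Duhamel denominator has modulus $\sim \eta N^2$, and for fixed $t>0$ the dissipative factor $e^{-2t\eta N^2}$ forces the numerator to tend to $1$ as $N\to\infty$. Hence the $k=-1$ coefficient has modulus $\gtrsim_{\eta} N^{-2s-2}$, giving
$$\left\|\int_0^t S(t-t')\bigl[(S(t')\phi_N)\,\partial_x S(t')\phi_N\bigr]\,dt'\right\|_{H^s}\;\gtrsim_{s,\eta}\; N^{-2s-2}.$$
Since $s<-1$ gives $-2s-2>0$, the left-hand side diverges as $N\to\infty$, while the right-hand side $\|\phi_N\|_{H^s}^2$ stays bounded, contradicting the chained estimate.

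\paragraph*{}
The main technical obstacle is the careful bookkeeping of the four semigroup exponentials $e^{it q(k)-tp(k)}$ at the three frequencies $N,\,-(N+1),\,-1$ that appear after applying $S(t')$ to $\phi_N$ and $S(t-t')$ to the product, so that the algebraic cancellations producing the linear resonance $-2\beta N$ and the quadratic dissipation $2\eta N^2$ emerge cleanly. Once these identities are in place, the rest is the elementary evaluation of $\int_0^t e^{-2t'(i\beta N+\eta N^2)}\,dt'$ and an elementary lower bound on its modulus for $N$ large, together with the observation that $\langle -1\rangle^s$ is a constant independent of $N$.
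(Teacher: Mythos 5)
Your proposal is correct and follows essentially the same route as the paper: chain the embedding with \eqref{IP1}--\eqref{IP2} to get an a priori bilinear bound in $H^s$, then defeat it with a two-mode datum at high frequencies summing to a fixed low mode, where the resonance has dissipative part $\sim \eta N^2$ and dispersive part $\sim \beta N$, yielding a lower bound $N^{-2s-2}\to\infty$ for $s<-1$. The only (cosmetic) differences are your choice of frequencies $N$ and $-(N+1)$ with output mode $k=-1$ instead of the paper's $N$ and $1-N$ with output $k=1$, and your direct lower bound on the modulus of the Duhamel factor rather than the paper's estimate of its real part.
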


\begin{proof}
Let $s<-1$, $\beta,\eta>0$ and $T>0$. Suppose that there exists a space $B_T^s$, which satisfies the conditions given in the theorem. Take $\phi \in H^s(\mathbb{T})$ and $u(t)=S(t)\phi$, then \eqref{IP2} implies that
\begin{equation}
\left\|\int_0^t S(t-t')[(S(t')\phi)(S(t')\phi_x)]\ dt'\right\|_{B_T^s}\lesssim \left\|S(t)\phi \right\|_{B_T^s}^2.
\end{equation}
Since $B_T^s$ is densely embedded in $C\left([0,T],H^s(\mathbb{T})\right)$, we obtain using \eqref{IP1} that for each $t\in [0,T]$
\begin{equation}\label{IP3}
\left\|\int_0^t S(t-t')[(S(t')\phi)(S(t')\phi_x)]\ dt'\right\|_{H^s}\lesssim \left\|\phi \right\|_{H^s}^2.
\end{equation}
We will show that \eqref{IP3} fails for an appropriated function $\phi$. Take $\phi$ defined by its Fourier transform as
\[
 \widehat{\phi}(k)=
  \begin{cases}
   N^{-s} & \text{if } k=N \text{ or } k=1-N, \\
   0       & \text{otherwise }
  \end{cases}
\]
where $N>1$ is a positive integer. It is easy to see that $\left\|\phi\right\|_s^2 \sim_s 1$. From the definition of the group $\left(S(t)\right)_{t\geq 0}$, we have that
\begin{align*}
\int_0^t S(t-t')&[(S(t')\phi)(S(t')\phi_x)]\ dt' \\
&= \int_0^t \sum_{k}e^{\left(iq(k)-p(k)\right)(t-t')}e^{ikx}(ik)\left(\widehat{S(t')\phi}\ast \widehat{S(t')\phi}\right)(k) \ dt'\\
&= \int_0^t \sum_{k}e^{\left(iq(k)-p(k)\right)(t-t')}e^{ikx}(ik) \\
&\quad \quad \quad \cdot\Bigl(\sum_{j}e^{\left(iq(j)-p(j)\right)t'}e^{\left(iq(k-j)-p(k-j)\right)t'}\widehat{\phi}(j)\widehat{\phi}(k-j)\Bigr) \ dt' \\
&= \sum_{k,j} e^{\left(iq(k)-p(k)\right)t+ikx}(ik)\widehat{\phi}(j)\widehat{\phi}(k-j)\int_0^t e^{t'\left[i\psi(k,j)-\sigma(k,j)\right]} dt' 
\end{align*}
where 
$$\psi(k,j)=\beta\left[ (k-j)|k-j|-|k|k+|j|j \right]$$
and
$$\sigma(k,j)=\eta\left[(k-j)^2-|k-j|-|k|^2+|k|+|j|^2-|j| \right].$$
Thus the above argument and the definition of $\phi$ imply 
\begin{align*}
&\left( \int_0^t S(t-t')[(S(t')\phi)(S(t')\phi_x)]\ dt' \right)^{\wedge}(1) \\ 
&\hspace{30pt}= e^{\left(iq(1)-p(1)\right)t}(i)N^{-2s}\int_0^t e^{t'\left[i\psi(1,N)-\sigma(1,N)\right]} dt'+\\
&\hspace{30pt}\quad \quad \quad+e^{\left(iq(1)-p(1)\right)t}(i)N^{-2s}\int_0^t e^{t'\left[i\psi(1,1-N)-\sigma(1,1-N)\right]} dt' \\
&\hspace{30pt}= 2  e^{\left(iq(1)-p(1)\right)t}(i) N^{-2s}\int_0^t e^{t'\left[i\psi(1,N)-\sigma(1,N)\right]} dt'.
\end{align*}
Here we have used that $\psi(1,N)=\psi(1,1-N)=2\beta(N-1)$ and $\sigma(1,N)=\sigma(1,1-N)=2\eta (N^2-2N+1)$. Hence, it follows that
\begin{align}
&\left\|\int_0^t S(t-t')[(S(t')\phi)(S(t')\phi_x)]\ dt'\right\|_{H^s}^2 \nonumber \\
&\hspace{30pt}\gtrsim_s \left|\left( \int_0^t S(t-t')[(S(t')\phi)(S(t')\phi_x)]\ dt' \right)^{\wedge}(1)\right|^2 \nonumber \\
&\hspace{30pt} \sim_s \left| e^{\left(iq(1)-p(1)\right)t}N^{-2s}\frac{e^{t\left[i\psi(1,N)-\sigma(1,N)\right]}-1}{i\psi(1,N)-\sigma(1,N)}\right|^2 \nonumber \\ 
&\hspace{30pt} \gtrsim_s \left| N^{-2s}\Re\left(\frac{e^{t\left[i\psi(1,N)-\sigma(1,N)\right]}-1}{i\psi(1,N)-\sigma(1,N)}\right)\right|^2. \label{IP5}
\end{align}
Since $\sigma(1,N)\sim \eta N^2$ and $\sigma(1,N)\sim \beta N$ we deduce that
\begin{align*}
\sigma(1,N)\left(1- e^{-t\sigma(1,N)}\cos\left(t\psi(1,N)\right)\right) &\gtrsim \eta N^2\left(1- e^{-t\eta N^2}\right), \\
\psi(1,N)e^{-t\sigma(1,N)}\sin\left(t\psi(1,N)\right) &\gtrsim -\beta N e^{-t\eta N^2}
\end{align*}
and 
$$|\sigma(1,N)|^2+|\psi(1,N)|^2 \sim N^2\left(\eta^2N^2+\beta^2\right),$$
hence
\begin{equation}\label{IP6}
\Re\left(\frac{e^{t\left[i\psi(1,N)-\sigma(1,N)\right]}-1}{i\psi(1,N)-\sigma(1,N)}\right) \gtrsim \frac{\left(\eta-(\eta+\beta)e^{-t\eta N^2}\right)}{(\eta^2+\beta^2)N^2}
\end{equation}

Therefore, from \eqref{IP5} and \eqref{IP6} 
\begin{equation} 
\left\|\int_0^t S(t-t')[(S(t')\phi)(S(t')\phi_x)]\ dt'\right\|_{H^s} \gtrsim_s \frac{\left(\eta-(\eta+\beta)e^{-t\eta N^2}\right)}{(\eta^2+\beta^2)}N^{-2(s+1)},
\end{equation}
but this contradicts \eqref{IP3} for $N$ large enough, since $\left\|\phi\right\|_s^2\sim_s 1$ and $s<-1$.
\end{proof}
As a consequence of Theorem \ref{TIP1} we can obtain the Theorem \ref{malpuestodos}.
\begin{proof}[Proof of Theorem \ref{malpuestodos}] 
Let $s<-1$, suppose that there exists $T>0$ such that the Cauchy problem \eqref{cl} is locally well-posed in $H^s(\mathbb{T})$ on the time interval $[0,T]$ and such that the flow map $\Phi:H^s(\mathbb{T})\rightarrow C\left([0,T];H^s(\mathbb{T})\right)$ is $C^2$ at the origin. When $\phi \in H^s(\mathbb{T})$, we will denote as $u_{\phi}(t)=\Phi(t)\phi$ the solution of the Cauchy problem \eqref{cl} with initial data $\phi$. This means that $u_{\phi}$ is a solution of the integral equation 

$$u_{\phi}(t)=S(t)\phi-\frac{1}{2}\int_{0}^t S(t-t')\partial_x(u_{\phi})^2(t')dt'.$$

By computing the Fr\'echet derivative of $\Phi(t)$ at $\phi$ with direction $\psi$, we obtain
\begin{equation}\label{IP7}
d_{\phi}\Phi(t)(\psi)=S(t)\psi-\int_{0}^t S(t-t')\partial_x\left(u_{\phi}(t') d_{\phi}\Phi(t')(\psi)\right)dt'.
\end{equation}
Since the Cauchy problem \eqref{cl} is supposed to be well-posed, we know using the uniqueness that $\Phi(t)(0)=0$, so we deduce from \eqref{IP7} that
\begin{equation}\label{IP8}
d_{0}\Phi(t)(\psi)=S(t)\psi.
\end{equation}
Using \eqref{IP7} and \eqref{IP8} we can compute the second Fr\'echet derivative at the origin in the direction $(\phi,\psi)$
$$d^2_0\Phi(t)(\phi,\psi)=-\int_0^t S(t-t')\partial_x[(S(t')\phi)(S(t')\psi)]\ dt'.$$
Assumption of $C^2$ regularity implies that $d^2_0\Phi(t)\in \mathcal{B}\left(H^s(\mathbb{T})\times H^s(\mathbb{T}),H^s(\mathbb{T})\right)$, which would lead to the following inequality
\begin{equation}\label{IP9}
\left\|d^2_0\Phi(t)(\phi,\psi)\right\|_{H^s}\lesssim \left\|\phi\right\|_{H^s}\left\|\psi\right\|_{H^s}, \forall \phi,\psi \in H^s(\mathbb{T}).
\end{equation}
But inequality \eqref{IP9} is equivalent to \eqref{IP3}, which has been shown to fail in the proof of Theorem \ref{TIP1}.
\end{proof}

\subsection*{Acknowledgements}
The authors are supported by the Universidad Nacional de Colombia, sede Bogot\'a. The authors would like to thank the seminar on Evolution PDE for postgraduate students at the Universidad Nacional de Colombia, sede Bogot\'a, for all their help and comments.

\bibliographystyle{amsplain}

\end{document}